\newtheorem{theorem}{Theorem}
\newtheorem*{theorem*}{Theorem}
\newtheorem*{theoremY*}{Theorem Y}
\newtheorem*{theoremAB*}{Theorem AB}
\newtheorem*{linearformsmtp*}{Mass transference principle for linear forms}
\newtheorem*{corollary*}{Corollary}
\newtheorem{lemma}{Lemma}
\newtheorem*{claim*}{Claim}
\theoremstyle{definition}
\theoremstyle{remark}
\newtheorem{remark}{Remark}
\newtheorem*{remark*}{Remark}
\renewcommand{\Bbb}[1]{\mathbb{#1}}
\newcommand{\N}{{\Bbb N}}         
\newcommand{\Q}{{\Bbb Q}}         
\newcommand{\R}{{\Bbb R}}        
\newcommand{\W}{\mathcal{W}}
\newcommand{\Z}{{\Bbb Z}}         
\newcommand{\cC}{{\cal C}}
\newcommand{\cD}{{\cal D}}
\newcommand{\cF}{{\cal F}}
\newcommand{\cG}{{\cal G}}
\newcommand{\cH}{{\cal H}}
\newcommand{\cL}{{\cal L}}
\newcommand{\cS}{{\cal S}}
\newcommand{\x}{\mathbf{x}}
\newcommand{\dist}{\operatorname{dist}}
\renewcommand{\le}{\leq}
\newcommand{\Exact}{\mathbf{Exact}}
\newcommand{\Bad}{\mathbf{Bad}}
\DeclarePairedDelimiter{\norm}{\lVert}{\rVert}
\DeclareMathOperator{\dimh}{\dim_H}
\title{The dimension of the set of $\psi$-badly approximable points in all ambient dimensions; on a question of Beresnevich and Velani. }
\author{Henna Koivusalo \\ (Bristol) \and Jason Levesley \\ (York) \and Benjamin Ward\footnote{B. W. gratefully acknowledges support from the EPSRC research grant (EP/W522430/1), and Australian Research Council Discovery grant (No. 200100994).} \\ (La Trobe) \and Xintian Zhang \\ (Bristol)}
\date{\today}
\begin{document}
\frenchspacing
\maketitle
%
%

%
%

\begin{abstract}

Let $\psi:\N \to [0,\infty)$, $\psi(q)=q^{-(1+\tau)}$ and let $\psi$-badly approximable points be those vectors in $\R^{d}$ that are $\psi$-well approximable, but not $c\psi$-well approximable for arbitrarily small constants $c>0$. We establish that the $\psi$-badly approximable points have the Hausdorff dimension of the $\psi$-well approximable points, the dimension taking the value $(d+1)/(\tau+1)$ familiar from theorems of Besicovitch and Jarn\'ik. The method of proof is an entirely new take on the Mass Transference Principle by Beresnevich and Velani (Annals, 2006); namely, we use the colloquially named `delayed pruning' to construct a sufficiently large $\liminf$ set and combine this with ideas inspired by the proof of the Mass Transference Principle to find a large $\limsup$ subset of the $\liminf$ set. Our results are a generalisation of some  $1$-dimensional results due to  Bugeaud and Moreira (Acta Arith, 2011), but our method of proof is nothing alike. 
\end{abstract}

%
%

\section{Introduction} \label{intro}
Throughout let $\psi:\N \to (0, \infty)$ be a monotonic decreasing function. We will use the notation $\psi_\tau$ when $\psi(q) = q^{-\tau}$ for $\tau \in (0, \infty)$. The set of  {\it $\psi$-well approximable points}, denoted by $\W_{d}(\psi)$, is defined to be the set
\begin{equation*}
\W_{d}(\psi):=\left\{x \in [0,1]^{d} : \max_{1 \leq i \leq d} \left|x_{i}-\frac{p_{i}}{q} \right| < \frac{\psi(q)}{q} \quad \text{for i.m.} \, \, \, (p,q) \in \Z^{d} \times \N \right\}\, ,
\end{equation*}
where i.m. denotes infinitely many. The main object of interest in this article is the {\it Hausdorff dimension} of the {\it $\psi$-badly approximable points}; those points of $\W_d(\psi)$ for which the approximation by $\psi$ cannot be improved by an arbitrarily small constant. More precisely,
\[
    \Bad_{d}(\psi):= 
    \W_{d}(\psi) \backslash \bigcap_{k=1}^{\infty} \W_{d}\left(\frac{1}{k}\psi \right).
\]
In \cite[Question 1]{BV08} Beresnevich and Velani asked for the Hausdorff dimension of $\Bad_{d}(\psi)$ and conjectured that the Hausdorff dimension was equal to the dimension of $\W_{d}(\psi)$. We prove this conjecture true for a certain class of approximation functions. \par 
We recall the definition of Hausdorff measure and dimension. (For more details on such notions see for example \cite{Rog98, F14}.) For $ s\geq 0$ the {\it Hausdorff $s$-measure} of a set $F \subset \R^{d}$ is defined as
\begin{equation*}
    \cH^{s}(F):=\lim_{\rho \to 0^{+}} \inf \left\{ \sum_{i=1}^\infty |B_{i}|^{s} : F \subset \bigcup_{i=1}^\infty  B_{i} \quad \text{ and }\quad  |B_{i}|<\rho \, \, \forall \, \, i \right\} \, 
\end{equation*}
where $|\cdot |$ denotes the diameter of a set. The {\it Hausdorff dimension} of set $F \subset \R^{d}$ is defined as
\begin{equation*}
    \dimh F := \inf \left\{ s \geq 0 : \cH^{s}(F)=0 \right\}\, .
\end{equation*}
In the case when $ s = d $, the ambient dimension of our setting, Hausdorff $s$-measure coincides (up to a constant) with Lebesgue  measure which we denote by $\lambda_d$. The literature surrounding the metric properties of $\Bad_d(\psi)$ is extensive, and we only highlight some of the main results below. 

For approximation function $\psi_{1/d}$, the set $\Bad_{d}(\psi_{1/d})=\Bad_{d}$ is the classical set of badly approximable points. As a consequence of a landmark theorem of Khintchine \cite{K24} 
 we know that $\lambda_{d}(\Bad_d) = 0$ (This can be proven by a range of methods, see \cite{BDGW23} for an overview of techniques). It is then natural to ask, what is the Hausdorff dimension of $\Bad_d$. Jarn\'{\i}k \cite{J28} and Schmidt \cite{Schmidt66} showed that despite the set being null, it has full Hausdorff dimension in the cases where $d=1$ and $d>1$, respectively. That is
 \begin{equation*}
     \dimh \Bad_{d}=d\, .
 \end{equation*}

In the direction of $\psi$-well approximable points, Besicovitch \cite{B34} and Jarn\'{\i}k \cite{J29} independently proved that whenever $\tau>\frac{1}{d}$ 
\begin{equation*}
    \dimh \W_{d}(\psi_{\tau}) = \frac{d+1}{\tau+1} \,.
\end{equation*}
Dodson extended this result to the case of general approximation functions $\psi$ \cite[Theorem 2]{Dod92}. As $\Bad_{d}(\psi) \subset \W_{d}(\psi)$, these results give immediate upper bounds for the Hausdorff dimension of $\Bad_{d}(\psi)$. As is often the case, establishing the corresponding lower bound is the main obstacle in determining the exact dimension. 

The following refinement of $\W_d(\psi)$ and $\Bad_d(\psi)$, first introduced in \cite{BDV01}, is useful as they can be used to define sets whose points satisfy more delicate approximation properties. Let $\psi, \phi : \N \to (0, \infty)$ be monotonic decreasing functions with $\psi(q)>\phi(q)$ for all $q \in \N$ and denote
\begin{align*}
    D_{d}(\psi, \phi)
   & := \left\{ x \in [0,1]^{d}: \underset{\stackrel{\text{for all $(p,q) \in \Z^{d} \times \N$}}{\text{with $q$ sufficiently large}}}{\underbrace{\, \quad \phi(q)/q \, \, \leq \quad \, }}\max_{1 \leq i \leq d}\left|x_{i}-\frac{p_{i}}{q}\right| \underset{\text{for i.m. $(p,q) \in \Z^{d} \times \N$ }}{\underbrace{\, \quad < \, \,  \psi(q)/q \quad \, }} \right\} \\
    &=\W_{d}(\psi) \backslash \W_{d}(\phi).
\end{align*}
With these sets defined, 
the set of $\psi$-badly approximable points can now be alternatively written as
\[
    \Bad_{d}(\psi)= \bigcup_{k \in \N} D_{d}\left(\psi, \frac{1}{k}\psi\right),
\]
and the set of {\it exact approximation order $\psi$} is
\[
    \Exact_{d}(\psi):= \bigcap_{k \in \N} D_{d}\left(\psi, \left(1-\frac{1}{k}\right)\psi \right) = 
    \W_{d}(\psi) \backslash \bigcup_{k=1}^{\infty} \W_{d}\left(\left(1-\frac{1}{k}\right)\psi \right).
\]
Trivially $\Exact_{d}(\psi) \subset \Bad_{d}(\psi) \subset \W_{d}(\psi)$ and so $\Exact_{d}(\psi)$ can be used for lower bounds of dimension. 

One of the first results on the set $\Exact_{1}(\psi)$ was given by Jarn\'{\i}k \cite{Jar31} who, using the theory of continued fractions, proved that $\Exact_{1}(\psi) \not= \emptyset$ for functions $\psi(q)=o(q^{-1})$. Without the condition $\psi(q)=o(q^{-1})$ the theory becomes significantly more complicated. In particular, a classical result of Hurwitz tells us that $\W_{1}\left(\frac{1}{\sqrt{5}}\psi_{1}\right)=[0,1]$ and so
\begin{equation*}
\Exact_{1}(\psi_{1}) \subset \W_{1}(\psi_{1}) \backslash \W_{1}\left(\frac{1}{\sqrt{5}}\psi_{1}\right) = \emptyset\, .
\end{equation*}
 Conversely, if $\psi(q)=cq^{-1}$ for some $c>0$ small it is not necessarily true that $\Exact_{1}(\psi)=\emptyset$. In fact, for $c\to 0$, Moreira showed that the Hausdorff dimension of $\Exact_{1}(\psi)$ tends to one \cite[Theorem 2]{Mor18}. Nevertheless, throughout we will suppose the approximation function satisfies $\psi(q)=o\left(q^{-1/d}\right)$.

 Since the result of Jarn\'{\i}k there has been gradual progress towards establishing the Hausdorff dimension of $\Exact_{1}(\psi)$. In \cite{G63} G\"{u}ting 
proved that 
\[
    \dimh \left( \bigcup_{k \in \N} D_{1}\left(\psi_{\tau}, \psi_{\tau+\frac{1}{k}}\right) \right)=\dimh \W_{1}(\psi_{\tau})=\frac{2}{1+\tau},
\]
for $\tau > 1$.
In \cite{BDV01} Beresnevich, Dickinson and Velani improved upon G\"{u}ting's result by calculating the Hausdorff measure at the dimension and showed that
\[
    \cH^{\frac{2}{1+\tau}}\left( \bigcup_{k \in \N} D_{1}\left(\psi_{\tau}, \psi_{\tau+\frac{1}{k}}\right) \right)= \infty \, .
\]
Furthermore they considered approximation functions with "logarithmic error" and showed that for $\tau>\frac{1}{d}$
\begin{equation*}
    \dimh \left( \bigcup_{k \in \N} D_{d}\left( \psi_{\tau},\, q \mapsto q^{-\tau}(\log q)^{-\frac{1}{k}}\right) \right)=\frac{d+1}{\tau+1}. 
\end{equation*}
This was established by comparing the Hausdorff measures of the two approximation sets $\W_{d}( \psi_{\tau})$ and $\W_{d}\left(q \mapsto q^{-\tau}(\log q)^{-\varepsilon}\right)$ for small $\varepsilon>0$. However the technique used here cannot be applied in the "constant error" case as the Hausdorff measure of the two sets would be the same. In a series of papers Bugeaud \cite{Bu03, Bu08} and Bugeaud and Moreira \cite{BM11} proved a complete result in one dimension\footnote{Indeed, this result of Bugeaud and Moreira doesn't only hold for $\psi_\tau$ but also for general approximation functions $\psi$. We state this special case for simplicity as it is the relevant statement for the purposes of the current work.}, showing that
\begin{equation*}
    \dimh \Exact_{1}(\psi_\tau)=\frac{2}{1+\tau}. 
\end{equation*}
It is then a consequence of the inclusions $\W_{1}(\psi) \supset \Bad_{1}(\psi) \supset \Exact_{1}(\psi)$ that 
\begin{equation*}
    \dimh \Bad_{1}(\psi_\tau)=\frac{2}{1+\tau} \, ,
\end{equation*}
Furthermore, in \cite[Theorem 2]{Bu03} Bugeaud also showed that
\begin{equation*}
    \cH^{\frac{2}{1+\tau}}(\Bad_{1}(\psi_{\tau}))=\infty\,.
\end{equation*}
The series of papers by Bugeaud and Moreira rely on results from the theory of continued fractions. Higher dimensional versions of continued fractions have been extensively studied, see for example \cite{Brentjes1981,Schweiger2000} and \cite{BDKKKT2023, Karpenkov2022} for more recent approaches. However, trying to apply arguments from $1$-dimension to higher dimensions often fails and in particular, the methods of Bugeaud and Moreira do not seem to be applicable to the study of $\Bad_d(\psi_\tau)$.

In the current work we compute the Hausdorff dimension $\Bad_{d}(\psi_{\tau})$ for any dimension $d$. That is, we prove the following;

\begin{theorem} \label{main}
Let $\psi_{\tau}(q)=q^{-\tau}$ with $\tau>\frac{1}{d}$, and let $B\subset [0,1]^{d}$ be any open ball. Then  
\[
\dimh \Bad_{d}(\psi_{\tau})\cap B = \frac{d+1}{1+\tau}.
\]

\end{theorem}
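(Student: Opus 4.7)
The upper bound $\dimh \Bad_{d}(\psi_\tau) \cap B \le \tfrac{d+1}{1+\tau}$ is immediate from the trivial inclusion $\Bad_{d}(\psi_\tau) \subset \W_{d}(\psi_\tau)$ together with the Jarn\'ik--Besicovitch theorem (in Dodson's extension to general $\psi$). All the work lies in the matching lower bound. Since
$\Bad_{d}(\psi_\tau) = \bigcup_{k \ge 1} D_{d}(\psi_\tau,\psi_\tau/k)$, it is enough to fix a sufficiently large $k$ and construct, inside $B$, a set $F \subset D_{d}(\psi_\tau,\psi_\tau/k) \cap B$ with $\dimh F \ge \tfrac{d+1}{1+\tau}$. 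Following the two-stage blueprint advertised in the abstract, I would build $F$ by first producing a "liminf" Cantor-like set $L$ on which every point is uniformly separated from rationals $p/q$ at the forbidden scale $\psi_\tau(q)/(kq)$, and then a "limsup" subset $F \subset L$ whose points are still caught by infinitely many approximation balls of the genuine radius $\psi_\tau(q)/q$.

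For the construction of $L$, fix a rapidly growing sequence $Q_0 < Q_1 < \cdots$ and perform a Cantor-like construction in $B$ in which the surviving pieces at level $n$ have diameter comparable to $Q_{n}^{-(1+\tau)}$. At each level one must excise from the surviving pieces the bad balls $B(p/q,\psi_\tau(q)/(kq))$ for the relevant range of $q$. Removing these at their "natural" level---the level at which they first matter---eats away so much of the Cantor pieces that no room remains to place the genuine approximation balls later. The delayed pruning device is to postpone the removal of each bad ball to a much later level $m(n) \gg n$, by which time the bad ball is minute compared with the surviving pieces and the proportional loss of volume is negligible. By tuning $Q_n$, the delay $m(n)$, and $k$, one arranges that every piece at every level retains, say, at least half of its volume after all accumulated pruning, so $L$ supports a natural probability measure of Frostman exponent $s = \tfrac{d+1}{1+\tau}$.

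Given $L$, to extract a $\limsup$ subset $F$ I would adapt the combinatorial core of the Beresnevich--Velani Mass Transference Principle to the Cantor skeleton of $L$. At each sufficiently late level one selects, inside every surviving piece, \emph{capture balls} $B(p/q,\psi_\tau(q)/q)$ centred at rationals that have already survived the delayed pruning and whose inflation by a bounded factor still fits inside the piece; these capture balls play the role of the "shrunken" balls in the classical MTP. The MTP-style step is a covering/density estimate showing that the proportion of each surviving piece covered by such capture balls stays bounded below uniformly in the level, so that nesting the choices along the levels produces a non-empty intersection $F$ whose points lie in infinitely many capture balls. Thus $F \subset \W_{d}(\psi_\tau) \cap L \subset D_{d}(\psi_\tau,\psi_\tau/k)$, and pushing the Cantor measure through the capture-ball structure via the mass distribution principle yields $\cH^{s}(F) = \infty$ for every $s < \tfrac{d+1}{1+\tau}$.

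The principal obstacle is the direct tension between the two halves of the construction: aggressive pruning of bad balls is needed to enforce the liminf separation, yet it erodes precisely the room where the capture balls must later sit. In ambient dimension $d=1$ Bugeaud--Moreira resolve this tension via continued fractions, a tool without a clean higher-dimensional analogue; the delayed pruning replaces continued fractions by decoupling the two scales, so that each bad ball is dealt with only when it is geometrically negligible, while the MTP-style argument recovers the Jarn\'ik--Besicovitch exponent uniformly in $d$. Making the delay function $m(n)$ and the scale sequence $Q_n$ interact correctly---so the pruning loss is summably small at every level while the density of surviving capture balls stays bounded below---will be the quantitatively most delicate step of the proof.
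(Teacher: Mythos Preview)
Your high-level architecture matches the paper exactly: a delayed-pruning construction of a $\liminf$ set avoiding the small bad balls, followed by an MTP-style extraction of a $\limsup$ subset inside it, finished off with the mass distribution principle. Where your plan falls short is that it omits the one geometric idea that makes the quantitative bookkeeping go through, and in fact your informal description of the delay points in the wrong direction.

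The missing idea is the \emph{Simplex Lemma}: in a cube of volume at most $(d!)^{-1}Q^{-(d+1)}$, all rationals with denominator $\le Q$ lie on a single affine hyperplane. The paper builds its cubes at the Dirichlet scale $N^{-n(d+1)/d}$ (not the $\psi_\tau$-scale $Q_n^{-(1+\tau)}$ you propose), precisely so that this lemma applies. The bad balls attached to rationals with $N^{n-1}\le q<N^n$ are then contained in a single $\delta(n)$-thickened hyperplane per cube, and one removes that thin slab at the much later level $\ell(n)\approx (n+u)\tfrac{(1+\tau)d}{d+1}$, where the cubes have shrunk to roughly the thickness $\delta(n)$. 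The point of the delay is not that a bad ball becomes ``minute compared with the surviving pieces'' (it does not---it becomes \emph{large} relative to level-$\ell(n)$ cubes); rather, the hyperplane structure means you are removing a codimension-one object, so only an $N^{(n-\ell(n))(d+1)/d}$ proportion of cubes is lost, and this is summable in $n$. Without the Simplex Lemma you have no a priori control on how the rationals distribute among pieces, and your proposed volume accounting (``proportional loss of volume is negligible'') has no mechanism behind it.

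A second, related gap concerns the capture balls. You write that they are ``centred at rationals that have already survived the delayed pruning'', but no rational survives: every $p/q$ sits inside its own bad ball. The paper resolves this by singling out a subcollection $Q(N,\tau)$ of \emph{leading rationals}---one per removed hyperplane---whose $\delta(n)$-ball still meets a surviving cube of the previous pruning level. Two nontrivial lemmas then show that (i) every point of the $\liminf$ set is $\psi_{1/d}$-approximable by leading rationals, and (ii) every $\psi_\tau$-capture ball around a leading rational contains a surviving cube of comparable size. These are what feed the $T_{G,I}$ covering lemma (the analogue of the $K_{G,B}$ lemma in the original MTP). Your proposal gestures at this step but does not identify the selection principle or the two supporting lemmas, and without them the MTP-style density estimate you need has no input.
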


The proof has been inspired by the proof of the Mass Transference Principle (MTP) of Beresnevich and Velani \cite{BV06}. The MTP is a powerful technique, introduced to Diophantine approximation in 2006, when it was used to prove a Hausdorff measure version of the Duffin-Schaeffer conjecture. The original MTP states, loosely speaking, that if a limsup set of balls has full Lebesgue measure, then the limsup set of the same balls shrunk in a controlled way satisfies a dimension lower bound. The MTP has since been adapted, e.g. for other measures, and other shapes \cite{AB19, KR21, WW19, WWX15}. However, in studying $\psi$-badly approximable points, we are facing a set-up not covered by the many generalisations: we need a Hausdorff dimension estimate for the $\limsup$ set given by balls centred outside of the zero measured set in which we would like the dimension lower bounds to hold! The proof we present relies on a few key ideas: We emulate $\Bad_d(\psi_\tau)$ by a set of almost full measure (see \S~ \ref{cC(N)}), and for `positive' approximation we use balls centred at rationals that are not too far away from this set (see \S~\ref{subset_W(psi)}). These allow us to approach the dimension bound in the spirit of the proof of MTP, although there are some further geometric obstacles to overcome (see \S~ \ref{cD(B)}).

%

Our main result is actually a consequence of the following result, which we also state for comparison to the series of results of Moreira and Bugeaud on dimension of $\Exact_1(\psi_\tau)$. 

\begin{theorem} \label{D-set}
    Let $\psi_{\tau}$ and $B\subset [0,1]^{d}$ be as above with $\tau>\frac{1}{d}$. Then there exists constant $0<C<1$ dependent only on $B$, $\tau$ and $d$ such that 
    \begin{equation*}
        \dimh D_{d}\left(\psi_{\tau},C\psi_{\tau} \right)\cap B =\frac{d+1}{1+\tau}.
    \end{equation*}
\end{theorem}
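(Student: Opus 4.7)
Since $D_{d}(\psi_\tau, C\psi_\tau) \subset \W_{d}(\psi_\tau)$, the classical Besicovitch--Jarn\'ik--Dodson theorem immediately supplies the upper bound $\dimh D_{d}(\psi_\tau, C\psi_\tau) \cap B \leq (d+1)/(1+\tau)$, so the substantive content lies in the matching lower bound. My plan is to construct a Cantor-like subset $K$ of $D_{d}(\psi_\tau, C\psi_\tau) \cap B$ together with a probability measure $\mu$ on $K$ whose lower local dimension is at least $(d+1)/(1+\tau)-\eta$ for arbitrary $\eta>0$. The mass distribution principle then finishes the job.

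The construction proceeds in two intertwined stages. In the first, implementing the \emph{delayed pruning}, I would fix a small constant $C = C(B, \tau, d)$ and a rapidly growing sequence $\mathcal{N} = (N_n)_{n \in \N}$, and successively remove from $B$ the balls $B(\mathbf{p}/q, C\psi_\tau(q)/q)$ with $N_{n-1} < q \leq N_n$ and $\mathbf{p} \in \Z^d$. Because $\tau > 1/d$, the series $\sum_q q^d (C q^{-(1+\tau)})^d = C^d \sum_q q^{-d\tau}$ converges, so for $C$ small enough we retain a $\liminf$-type set $\mathcal{C}(\mathcal{N}) \subset B$ whose Lebesgue measure is arbitrarily close to $\lambda_d(B)$. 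By construction $\mathcal{C}(\mathcal{N}) \cap \W_{d}(C\psi_\tau) = \emptyset$, so every point of $\mathcal{C}(\mathcal{N})$ automatically satisfies the ``bad'' half of the definition of $D_{d}(\psi_\tau, C\psi_\tau)$.

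In the second stage, in the spirit of the Mass Transference Principle, I would inject into $\mathcal{C}(\mathcal{N})$ a $\limsup$ subset of $\W_{d}(\psi_\tau)$. At a level $n$ inherited from the pruning, inside each surviving cell I would place a dense family of balls $B(\mathbf{p}/q, \psi_\tau(q)/q)$ with $q$ in a window comparable to $N_n$, chosen so that the centres $\mathbf{p}/q$ themselves lie in $\mathcal{C}(\mathcal{N})$ and so that the $\psi_\tau$-enlargements intersect $\mathcal{C}(\mathcal{N})$ in a set substantial enough to house the next generation. The standard count of rationals at scale $\psi_\tau(q)/q$, together with a pigeonhole argument against the small relative measure of the pruned region, should produce enough admissible centres. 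The resulting bookkeeping gives roughly $N_n^{d+1}$ children of radius $\asymp N_n^{-(1+\tau)}$ per parent, which is exactly the ratio producing the exponent $(d+1)/(1+\tau)$.

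Distributing mass uniformly across admissible descendants and passing to the weak limit yields $\mu$ supported on a Cantor set $K \subset D_{d}(\psi_\tau, C\psi_\tau) \cap B$. A standard estimate on the number of level-$n$ cells meeting $B(x,r)$ then gives $\mu(B(x,r)) \ll r^{(d+1)/(1+\tau) - \eta}$, and the mass distribution principle yields $\dimh K \geq (d+1)/(1+\tau) - \eta$; sending $\eta \downarrow 0$ completes the proof. I expect the main obstacle to be not the final Frostman calculation but the geometric compatibility in the second stage: producing, at every scale and inside every surviving cell, a well-separated family of rational centres that are themselves in $\mathcal{C}(\mathcal{N})$ while their $\psi_\tau$-balls cooperate with the pruned geometry. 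The constant $C$ and the growth rate of $\mathcal{N}$ must be tuned together so these demands can be met simultaneously and so that errors accumulated across infinitely many stages do not erode the exponent.
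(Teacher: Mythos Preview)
Your outline matches the paper's architecture (a large $\liminf$ set carrying the ``bad'' condition, a $\limsup$ subset inside it carrying the ``well-approximable'' condition, then a mass distribution), but there is a genuine gap at the step you yourself flag as the hard one.

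First a revealing slip: the centres $\mathbf{p}/q$ cannot lie in $\mathcal{C}(\mathcal{N})$, since every rational point is the centre of a removed ball. This is precisely the obstruction the paper highlights in the introduction: one needs lower bounds for a $\limsup$ of balls whose centres all lie \emph{outside} the target set. The paper resolves this by working not with rationals in $\cC^\tau(N)$ but with ``leading rationals'' $Q(N,\tau)$, defined so that a small neighbourhood of $\mathbf{p}/q$ meets the surviving cubes of the previous level.

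The substantive gap is that ``pigeonhole against small relative measure of the pruned region'' does not close the iteration. The ball $B(\mathbf{p}/q, q^{-(1+\tau)})$ is vastly smaller than both the Dirichlet scale $q^{-(1+1/d)}$ and the spacing $q^{-1}$ of the rational lattice, so there is no a priori reason any rational in a given surviving cell should have its $\psi_\tau$-ball meet $\mathcal{C}(\mathcal{N})$ in a set of comparable measure. What makes this work in the paper is the Simplex Lemma: rationals with $q\in[N^{n-1},N^n)$ inside a cube of volume $\ll N^{-n(d+1)}$ lie on a single hyperplane. The paper's ``delayed pruning'' is therefore not the removal of balls that you describe, but the removal of \emph{thickened hyperplanes}, carried out at a level $\ell(n)\approx n(1+\tau)d/(d+1)$ strictly later than the level $n$ at which the hyperplane is detected. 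This hyperplane structure buys two things your proposal lacks: a local density statement (Lemma~\ref{measure_cC_cubes}: every surviving cube retains a fixed proportion of $\cC^\tau(N)$), and, crucially, Lemma~\ref{I in B(t)}, which guarantees that for each leading rational the tiny ball $B(\mathbf{p}/q, q^{-(1+\tau)})$ actually \emph{contains} a surviving cube of comparable volume. Only with this in hand can the $T_{G,I}$ lemma (the replacement for the $K_{G,B}$ lemma of the Mass Transference Principle) be iterated. Your sketch does not supply any mechanism playing the role of the Simplex Lemma, and without it the second stage cannot be made to work.
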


\begin{remark}
    Note that Theorem~\ref{D-set} immediately implies Theorem~\ref{main}, since $D_{d}\left(\psi_{\tau},C\psi_{\tau} \right) \subset \Bad_{d}(\psi_{\tau})$. The constant $C>0$ is implicit in our proofs and hasn't been optimised. However, we cannot make it arbitrary close to $1$, and so we cannot prove any result on $\Exact_{d}(\psi_{\tau})$. Very recently Fregoli has proven a restricted higher dimensional version of the results of Bugeaud and Moreira \cite{Bu03, Bu08, BM11} for $n\geq 3$ and $\tau>1$. The technique used by Fregoli took their results and lifted it to higher dimensions by a clever observation. Briefly, for $\boldsymbol{x}=(x_{1},\boldsymbol{y})\in\R^{d}$ if $x_{1} \in \Exact_{1}(\psi_{\tau})$ and $\boldsymbol{y} \in W_{d-1}(\psi_{\tau})$ then $\boldsymbol{x} \in \Exact_{d}(\psi_{\tau})$. The dimension result can then be proven via the result of Bugeaud and Moreira and a Theorem of Jarn\'{\i}k on fibres. See \cite{Fregoli2023} for more details, we only wish to point out that our technique is significantly different. For other notions of size it was recently shown by Schleischitz that for certain functions $\psi$ the set $\Exact_{d}(\psi)$ has full packing dimension, see \cite[Theorem 3.6 \& Corollary 7]{Schleischitz2023}.
\end{remark}
\vspace{.5ex}
\begin{remark} 
For this set of approximation functions the restriction that $\tau>\frac{1}{d}$ is appropriate. If $\tau=\frac{1}{d}$ then as previously mentioned $\Bad_{d}(\psi_{\tau})=\Bad_{d}$, that is, the classical set of $d$-dimensional badly approximable points which has full Hausdorff dimension \cite{Schmidt66}. If $\tau<\frac{1}{d}$ then $\Bad_{d}(\psi_{\tau})=\emptyset$ since $\W_{d}(c \psi_{\tau}) \supseteq \W_{d}(\psi_{1/d})=[0,1]^{d}$ for any constant $c>0$ and so $\Bad_{d}(\psi_{\tau})\subset W_{d}(\psi_{\tau}) \backslash [0,1]^{d}= \emptyset$. 
\end{remark} 
\vspace{.5ex}
\begin{remark} 
In the current work we haven't considered general approximation functions $\psi$. Analogous to the results of Bugeaud and Moreira \cite{Bu03, Bu08, BM11}, we suspect that for monotonic decreasing $\psi(q)$ of order $o(q^{-\frac{1}{d}})$ 
\begin{equation*}
\dimh \Bad_{d}(\psi)=\dimh\W_d(\psi), 
\end{equation*}
where the Hausdorff dimension of $\dimh \W_d(\psi)$ is known \cite{Dod92}. We believe that it may be possible to adapt our argument to deduce this result in the case $\sum_{q=1}^{\infty} \psi(q)^{d}<\infty$. The case where $\sum_{q=1}^{\infty} \psi(q)^{d}=\infty$ appears significantly harder. 
\end{remark} 
\vspace{.5ex}

\begin{remark}
The theory of exact approximation has also been studied in a range of other settings. ZL Zhang proved a version of Bugeaud and Moreiras' result in the setting of the field of formal series \cite{Z12}, and He \& Xiong \cite{HX22} have proven an analogue in the setting of approximation by complex rational numbers. Both papers appeal to results from the theory of continued fractions in their respective setting to prove their main theorem. Very recently Bandi, Ghosh and Nandi \cite{BGN22} have proven a general theory in the setting of exact approximation. In their paper they obtain a result on exact approximation sets in hyperbolic metric spaces. However, as far as we can see the $d$-dimensional real result presented here cannot be deduced from the general theory presented in their paper. \par
\end{remark}

 The rest of the paper is organised as follows. In the following section we give an outline of the proof of Theorem \ref{main}. In \S~\ref{cC(N)} we find a subset $\cC^\tau(N) \subset \Bad_d(\psi_\tau)$ of large measure, as quantified in \S~\ref{cC(N)_measure}. In \S~\ref{subset_W(psi)} we define the subset of rationals $Q(N, \tau)$ that we consider for positive approximation on $\cC^\tau(N)$, and show that this subcollection of rationals is not too small. In \S~\ref{cD(B)} we present a construction of a Cantor set and a mass distribution, following the general outline of the proof of Mass Transference Principle, but relying on a new geometric approximation Lemma \ref{TGI}.

%

\section{Outline of the proof of Theorem~\ref{main}} \label{proof_main}

Before going into the proof of Theorem~\ref{main} we give an overview of the methodology used. 

Firstly, since $\Bad_{d}(\psi_{\tau}) \subseteq \W_{d}(\psi_{\tau})$ the upper bound  follows from the Jarn\'{\i}k-Besicovitch Theorem. The main substance of the proof is in establishing a corresponding lower bound. 
To accomplish this we construct a Cantor subset $\cD(B)$ of $ \Bad_{d}(\psi_{\tau}) \cap B $ where $B$ is any arbitrary open ball in $(0,1)^{d}$. 
The construction of $\cD(B)$ can be split into three main steps. 

{\bf Step 1:} We construct a set that avoids `dangerous balls' i.e. parts of the unit cube that lie `too close' to rational points. The name `dangerous balls' is taken from the analogous set that is removed when constructing a Cantor subset of badly approximable points, see for example \cite[\S 7]{BRV16}. In this setting, given $N \in \N$ and $\tau> \frac{1}{d}$, we construct a set $\cC^{\tau}(N)$ such that
\begin{equation*}
\cC^{\tau}(N) \cap \bigcup_{\frac{p}{q} \in \Q^{d} \cap [0,1]^{d} } B\left(\frac{p}{q}, c_{N}q^{-1-\tau} \right)= \emptyset \, ,
\end{equation*}
for some constant $c_{N}>0$, where $B(x,r)=\{y \in \R^{d}: \norm{x-y}<r\}$ denotes the open ball with centre $x \in \R^{d}$ of radius $r>0$ and $\| \cdot \| $ is the standard supremum norm in $\mathbb{R}^d$. This set, and properties of $\cC^{\tau}(N)$ that will be required later in the proof of the main result, are dealt with in \S\ref{cC(N)}.  
 
{\bf Step 2:} We then construct a $\limsup$ subset of $\W_{d}(\psi_{\tau})$ such that each ball in the $\limsup$ set retains a positive proportion of its mass when intersected with $\cC^{\tau}(N)$. We achieve this by constructing a set $Q(N,\tau) \subset \Q^{d}$ such that for any $\frac{p}{q} \in Q(N,\tau)$ the inequality 
 \[
 \lambda_{d}\left( \cC^{\tau}(N) \cap B\left(\frac{p}{q}, q^{-1-\tau} \right) \right) \geq \kappa \lambda_{d}\left( B\left(\frac{p}{q}, q^{-1-\tau} \right)\right)\, 
 \]
 holds for some fixed constant $\kappa>0$ independent of $\frac{p}{q}$. This is established in the course of proving Lemma~\ref{Q_approximations}.  

{\bf Step 3:} The final part of the argument is to construct a Cantor subset and a mass distribution, inspired by the Mass Transference Principle (a now standard tool for proving metric results on $\limsup$ sets, see \cite{AT19}). A key step in the construction of the Cantor set $\cD(B)$ is to show that for every ball $\tilde{B}=B\left(\frac{p}{q}, q^{-1-\tau} \right)$ with $\frac{p}{q} \in \Q$ there exists a finite disjoint collection of balls of the form $B\left(\frac{p'}{q'}, q'^{-1-\tau} \right)$ with $\frac{p'}{q'} \in \Q$ that are contained in $\tilde{B}$ and intersect a positive proportion of $\cC^{\tau}(N)$. A crucial lemma to ensure this happens is Lemma~\ref{TGI} of \S\ref{cD(B)}. This statement is the counterpart of the $K_{G,B}$ lemma from the proof of the MTP \cite{BV06}. 

 This methodology is sufficient to prove Theorem~\ref{D-set} which in turn implies Theorem~\ref{main}.

%
%

\section{Step 1: The construction of \texorpdfstring{$\cC^{\tau}(N)$}{the first Cantor set}} \label{cC(N)}
\texorpdfstring{$\cC^{\tau}(N)$}{TEXT}
 The following `Simplex lemma' is a crucial element in the construction of $\cC^{\tau}(N)$. The proof can be found in \cite{KTV06}.
\begin{lemma}\label{simplex_lemma} Let $d \geq 1$ be an integer and $Q \in \N$. Let $E \subset \R^{d}$ be a convex set of $d$-dimensional Lebesgue measure
\begin{equation*}
\lambda_{d}(E) \leq (d!)^{-1}Q^{-(d+1)}.
\end{equation*}
Suppose $E$ contains $d+1$ rational points with denominator $1 \leq q \leq Q$. Then these rational points lie on some hyperplane of $\R^{d}$.
\end{lemma}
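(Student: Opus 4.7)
The plan is a standard volume-counting argument by contradiction. Suppose that $E$ contains $d+1$ rational points $\mathbf{x}_0,\ldots,\mathbf{x}_d$ with $\mathbf{x}_i = \mathbf{p}_i/q_i$ and $1 \le q_i \le Q$, and assume for contradiction that these points are in \emph{general position}, i.e.\ no affine hyperplane of $\R^d$ contains all of them. Since $E$ is convex, the closed simplex $\Sigma$ with vertices $\mathbf{x}_0,\ldots,\mathbf{x}_d$ is contained in $E$, so $\lambda_d(E)\ge \lambda_d(\Sigma)$. The strategy is to lower bound $\lambda_d(\Sigma)$ by $(d!)^{-1}Q^{-(d+1)}$, which together with the hypothesis forces the points onto a common hyperplane.

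The lower bound on $\lambda_d(\Sigma)$ is obtained from the standard simplex volume formula
\[
\lambda_d(\Sigma)=\frac{1}{d!}\bigl|\det M\bigr|,\qquad M=\begin{pmatrix} 1 & \mathbf{x}_0 \\ 1 & \mathbf{x}_1 \\ \vdots & \vdots \\ 1 & \mathbf{x}_d \end{pmatrix}.
\]
Writing $\mathbf{x}_i=\mathbf{p}_i/q_i$ and pulling the factor $1/q_i$ out of the $i$th row expresses $\det M$ as $(q_0 q_1\cdots q_d)^{-1}$ times the determinant of a matrix with integer entries $(q_i,\mathbf{p}_i)$. The general position assumption is precisely the statement that this integer determinant is nonzero, and hence has absolute value at least one. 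Combined with $q_i \le Q$ for each $i$, this gives
\[
\lambda_d(\Sigma)\;\ge\;\frac{1}{d!\,q_0 q_1\cdots q_d}\;\ge\;\frac{1}{d!\,Q^{d+1}},
\]
contradicting $\lambda_d(E)\le (d!)^{-1}Q^{-(d+1)}$ as soon as the inclusion $\Sigma\subseteq E$ is genuine; the degenerate equality case $\Sigma = E$ either does not arise in applications (where one has strict inequality) or reduces to the conclusion after an arbitrarily small shrink of $E$.

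There is no real obstacle beyond the bookkeeping of clearing the common denominator from the rows of $M$, which is the reason the bound on $\lambda_d(E)$ carries the precise factor $1/d!$ and exponent $d+1$: the constant $(d!)^{-1}Q^{-(d+1)}$ is sharp, being saturated when every $q_i$ equals $Q$ and the integer determinant is $\pm 1$. All the subsequent use of the lemma in \S\ref{cC(N)} should only rely on this sharp threshold, so no stronger form is needed.
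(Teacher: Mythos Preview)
Your argument is the standard one and is essentially correct; note that the paper itself does not prove this lemma but merely cites \cite{KTV06}, where precisely this simplex-volume computation is carried out. One small point: as you observe, the non-strict inequality $\lambda_d(E)\le (d!)^{-1}Q^{-(d+1)}$ in the statement leaves a borderline case where equality holds throughout, and in that case the conclusion can genuinely fail (e.g.\ $d=1$, $Q=1$, $E=[0,1]$, points $0/1$ and $1/1$). Your remark that this does not arise in the applications is exactly right --- in \S\ref{cC(N)} the cubes have volume $t^{-d}N^{-n(d+1)}$ with $t^d>d!$, so the hypothesis is always satisfied with strict inequality --- but it would be cleaner either to state the lemma with a strict inequality or to simply note that the argument yields the contrapositive ``if the points are affinely independent then $\lambda_d(E)\ge (d!)^{-1}Q^{-(d+1)}$'', which is all that is ever used.
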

By splitting up the construction of $\cC^{\tau}(N)$ into suitable levels, this result tells us that regions containing `dangerous balls' will look like thickened $(d-1)$-dimensional hyperplanes in $[0,1]^{d}$. 
 
Fix some $N \in \N$. (We will need to take this $N$ large enough along the course of the proofs, but it will eventually be fixed.) Construct $\cC^{\tau}(N)$ as follows:
\begin{enumerate}
 \item 
Split $[0,1]^{d}$ into $t^{d}$ cubes of sidelength $t^{-1}$ for some $t \in \N$ where $t$ is chosen to satisfy $t > (d!)^{1/d}$. Now split each of these cubes into $N^{d+1}$ cubes $I$ each with side-length $t^{-1}N^{-\frac{d+1}{d}}$ and volume $t^{-d}N^{-(d+1)}$. Let $C_{1}$ denote the set of cubes constructed in this way. Note there are $t^{d}N^{(d+1)}$ cubes in total. 
\item Within each cube $I$ we want to eventually remove the rational points $(\frac{p_{1}}{q}, \dots , \frac{p_{d}}{q})$ with $q$ bounded by
 $1 \leq q < N$.
  Apply Lemma~\ref{simplex_lemma} to each cube in $I \in C_{1}$ to establish that any such rational points are contained in some hyperplane $L$ which we  choose to be the minimal such affine subspace. 
That is, if there is only a single point $\frac{p}{q} \in I$ then $L$ is a point, if there are two points $\frac{p}{q}, \frac{p'}{q'} \in I$ then $L$ is the unique line between $\frac{p}{q}$ and $\frac{p'}{q'}$ intersected with $I$ and so on. Lemma~\ref{simplex_lemma} tells us that at worst $L$ could be a $d$-dimensional affine hyperplane. For simplicity we will call any such $L$ a hyperplane. 
 Let $\cL_{1}$ denote the collection of all hyperplanes $L$ constructed in this manner for each cube in $C_{1}$. Remember these sets for later.
 \item Repeat this process; Assume that levels up to $n-1$ have been constructed. At the $n$th level split each cube $I \in C_{n-1}$ into $N^{d+1}$ cubes each with
side length $t^{-1} N^{-(n-1)\frac{d+1}{d}} \times N^{-\frac{d+1}{d}}=t^{-1} N^{-n\frac{d+1}{d}}$  and volume $t^{-d} N^{-n(d+1)}$.
 Let $C_{n}$ denote all cube constructed in this layer. For a cube $I \in C_{j}$ for $j=1, \dots , n-1$ let $C_{n}(I)$ denote all cubes in $C_n$  which are contained in $I$. 
 Apply Lemma~\ref{simplex_lemma} to each $I^\prime \in C_{n}$ to find a hyperplane $L$ containing all rational points $(\frac{p_{1}}{q}, \dots , \frac{p_{d}}{q}) \in I^\prime$ with 
 $1 \leq q < N^{n}$.
 Let $\cL_{n}$ denote the collection of all level $n$ hyperplanes $L$ constructed in this way. For a cube $I \in C_{j}$ for $j=1, \dots , n-1$, let $\cL_{n}(I)$ denote the set of all hyperplanes that correspond to cubes $I^\prime \in C_{n}(I)$. Observe that a hyperplane $L \in \cL_{n}$ contains all the rational points with denominators 
 \begin{equation*}
 N^{n-1} \leq q < N^{n}. 
 \end{equation*}

 \item \textit{Removal of the hyperplanes}: Recall we eventually want to remove all dangerous balls $B\left(\tfrac{p}{q}, cq^{-(1+\tau)}\right)$ for some constant $c>0$. Choose
 \begin{equation*}
 c_{N}=t^{-1}N^{-u(1+\tau)}
 \end{equation*}
 with $u \in \N$ constant such that
 \begin{equation}\label{u_def}
     u>3>2+\frac{(d+1)(d-1)}{(1+\tau)d^{2}}.
 \end{equation}
 Then
 \begin{equation*}
 \bigcup_{N^{n-1} \leq q < N^{n}} B\left(\frac{p}{q} , c_{N}q^{-(1+\tau)}\right) \subset \bigcup_{N^{n-1} \leq q < N^{n}} B\left(\frac{p}{q}, t^{-1}N^{-(n-1+u)(1+\tau)}\right).
 \end{equation*}
 At layer 
 \begin{equation*}
     \ell(1):=\left\lfloor u\frac{(1+\tau)d}{d+1}\right\rfloor
 \end{equation*}
 remove all cubes $I\in C_{\ell(1)}$ that intersect the $t^{-1}N^{-u(1+\tau)}$-thickening of all hyperplanes contained in $\cL_{1}$. That is, for a hyperplane $L \in \cL_{1}$ constructed in the cube $I' \in C_{1}$ and
 \begin{equation*}
     \delta(1)=t^{-1}N^{-u(1+\tau)}\, ,
 \end{equation*}
 let
\begin{equation} \label{thickened_hyperplane level 1}
L^{\delta(1)}:= \left\{ \x \in I' : \dist(\x, L)=\inf_{(a_{1}, \dots , a_{d}) \in L}\left\{\max_{1 \leq i \leq d} |x_{i}-a_{i}| \right\} \leq \delta(1) \right\}\, .
\end{equation}
Then, denoting $\cS_{\ell(1)}$ as the first level surviving set of cubes in $C_{\ell(1)}$, we have
\begin{equation*}
\cS_{\ell(1)}:= \left\{ I \in C_{\ell(1)} :\, \,  I \cap \bigcup_{L \in \cL_{1}} L^{\delta(1)} = \emptyset  \right\}.
\end{equation*}
\item \textit{Iterative removal of hyperplanes}: We follow the same step as the construction of the first layer of surviving cubes, with the small exception that we only remove hyperplanes that still contain 'significant' rational points after the previous level pruning. That is, at layer
 \begin{equation} \label{l(n)}
 \ell(n):= \left\lfloor (n-1+u)\frac{(1+\tau)d}{d+1} \right\rfloor
 \end{equation}
 remove all cubes $I \in C_{\ell(n)}$ that intersect the $t^{-1}N^{-(n-1+u)(1+\tau)}$-thickening of all hyperplanes contained in $\cL_{n}^{*}$, where
 \begin{equation*}
     \cL_{n}^{*}:=\left\{ L \in \cL_{n} : \, \, 
      \exists \frac{p}{q} \in L \text{  with  } \begin{cases} i) \quad N^{n-1}\leq q<N^{n}\, ,\\
     ii)\quad   B\left(\tfrac{p}{q},\delta(n)\right)\cap \bigcup\limits_{I\in \cS_{\ell(n-1)}}I \neq \emptyset\, .
     \end{cases} \right\}.     
 \end{equation*}
 
 That is, for a hyperplane $L \in \cL_{n}^{*}$ and
 \begin{equation*}
     \delta(n)=t^{-1}N^{-(n-1+u)(1+\tau)}\, ,
 \end{equation*}
 let
\begin{equation} \label{thickened_hyperplane}
L^{\delta(n)}:= \left\{ \x \in I' : \dist(\x, L)=\inf_{(a_{1}, \dots , a_{d}) \in L}\left\{\max_{1 \leq i \leq d} |x_{i}-a_{i}| \right\} \leq \delta(n) \right\}\, .
\end{equation}
Then, denoting $\cS_{\ell(n)}$ as the surviving set of cubes in $C_{\ell(n)}$, we have
\begin{equation*}
\cS_{\ell(n)}:= \left\{ I \in C_{\ell(n)} :\, \,  I \cap \bigcup_{L \in \cL_{n}^{*}} L^{\delta(n)} = \emptyset  \quad \text{ \& } \quad \exists \, \, I' \in \cS_{\ell(n-1)} \text{ s.t. } I \subset I'\right\} \, .
\end{equation*}
For ease of notation in later stages let
\begin{equation*}
    \widehat{L^{\delta(n)}}:= \left\{ I\in C_{\ell(n)}: I\cap L^{\delta(n)} \neq \emptyset\right\}\, ,
\end{equation*}
so in particular 
\begin{equation*}
    \cS_{\ell(n)}=\left(\bigcup_{I\in \cS_{\ell(n-1)}}C_{\ell(n)}(I) \right) \backslash \left( \bigcup_{L\in\cL^{*}_{n}} \widehat{L^{\delta(n)}}\right).
\end{equation*}
For any cube $I \in C_{j}$ with $j=1, \dots , \ell(n)-1$ let $\cS_{\ell(n)}(I)$ denote the set of cubes in $\cS_{\ell(n)}$ that are contained in $I$. 
 \item Let
 \begin{equation*}
 \cC^{\tau}(N)=\bigcap_{n \in \N} \bigcup_{I \in \cS_{\ell(n)}} I.
 \end{equation*}
  \end{enumerate}

  Observe that our constructed set $\cC^{\tau}(N)$ does indeed avoid all dangerous balls. We have the following statement
  \begin{lemma}
  \begin{equation*}
  \cC^{\tau}(N) \cap \bigcup_{\frac{p}{q} \in \Q^{d} \cap [0,1]^{d}} B\left( \frac{p}{q}, c_{N}q^{-(1 + \tau)} \right)= \emptyset \, .
  \end{equation*}
  \end{lemma}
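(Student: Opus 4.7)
The plan is a short contradiction argument that reduces to comparing the radius of a dangerous ball with the thickening parameter $\delta(n)$. Fix $\tfrac{p}{q}\in\Q^{d}\cap[0,1]^{d}$ and suppose there were some $x\in\cC^{\tau}(N)\cap B\!\left(\tfrac{p}{q},c_{N}q^{-(1+\tau)}\right)$. Let $n\ge1$ be the unique integer with $N^{n-1}\le q<N^{n}$, and choose a cube $I\in C_{n}$ whose closure contains $\tfrac{p}{q}$. By the minimality built into Step 2 of the construction (via Lemma~\ref{simplex_lemma}), the hyperplane $L\in\cL_{n}$ associated with $I$ must pass through $\tfrac{p}{q}$, since $\tfrac{p}{q}$ is itself a rational point in $I$ with denominator in the relevant range.

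The key calculation is that $c_{N}q^{-(1+\tau)}\le\delta(n)$: using $c_{N}=t^{-1}N^{-u(1+\tau)}$ and $q\ge N^{n-1}$,
\[
c_{N}q^{-(1+\tau)} \;\le\; t^{-1}N^{-u(1+\tau)}N^{-(n-1)(1+\tau)} \;=\; t^{-1}N^{-(n-1+u)(1+\tau)} \;=\; \delta(n).
\]
Consequently $\|x-\tfrac{p}{q}\|\le\delta(n)$, and since $\tfrac{p}{q}\in L$ this gives $\dist(x,L)\le\delta(n)$, placing $x$ in the $\delta(n)$-thickening of $L$.

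Next I would verify $L\in\cL_{n}^{*}$ in the case $n\ge2$ (for $n=1$ no $*$-filtering is imposed in Step 4, so $L\in\cL_{1}$ is automatic). Since $\cC^{\tau}(N)\subseteq\bigcup_{I'\in\cS_{\ell(n-1)}}I'$, the point $x$ lies in some $I'\in\cS_{\ell(n-1)}$; combined with $x\in B(\tfrac{p}{q},\delta(n))$ and $N^{n-1}\le q<N^{n}$, this is precisely the defining condition for $L\in\cL_{n}^{*}$. Finally, $\cC^{\tau}(N)\subseteq\bigcup_{I''\in\cS_{\ell(n)}}I''$ places $x$ in some $I''\in\cS_{\ell(n)}$. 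But $x\in L^{\delta(n)}$ means $I''\in\widehat{L^{\delta(n)}}$, contradicting the very definition of $\cS_{\ell(n)}$.

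I expect the only mild subtlety to be the boundary case in which the dangerous ball crosses a face of $I$, so that the cube $I''\in C_{\ell(n)}$ containing $x$ may actually sit inside a neighbour of $I$ in $C_{n}$ rather than inside $I$ itself. This can be handled either by working with closed cubes — so that $\tfrac{p}{q}$ lies in every $C_{n}$-cube whose closure contains it, and one applies the argument to whichever such cube also contains $x$ — or by interpreting $L^{\delta(n)}$ as the full $\delta(n)$-thickening of the affine hyperplane $L$ in $\R^{d}$ (which picks up the same collection of $C_{\ell(n)}$-cubes for removal). Modulo this bookkeeping, the lemma is essentially immediate from the construction.
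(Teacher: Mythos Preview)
Your argument is correct and follows essentially the same route as the paper: both hinge on the inequality $c_{N}q^{-(1+\tau)}\le\delta(n)$ and on the nesting $\cC^{\tau}(N)\subseteq\bigcup_{I\in\cS_{\ell(n-1)}}I$. The only organisational difference is that the paper performs a case split on whether $L\in\cL_{n}^{*}$, whereas you use the very point $x$ to witness condition~ii) and thereby force $L\in\cL_{n}^{*}$ directly, collapsing the two cases into one; your explicit flagging of the boundary bookkeeping is also a point the paper passes over silently.
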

  \begin{proof}
      Suppose there exists $\frac{p}{q}\in\Q^{d}$ such that
      \begin{equation*}
          B\left(\frac{p}{q},c_{N}q^{-(1+\tau)}\right)\cap \cC^{\tau}(N) \neq \emptyset.
      \end{equation*}
      We show this to be false. Suppose $N^{k-1}\leq q < N^{k}$ for $k\in\N$. Then
      \begin{equation}\label{delta size relative to q}
          c_{N}q^{-(1+\tau)}\leq c_{N}N^{-(k-1)(1+\tau)}=t^{-1}N^{-(k-1+u)(1+\tau)}=\delta(k)\, .
      \end{equation}
      By the Simplex Lemma there exists hyperplane $L=L\left(\tfrac{p}{q}\right)\in\cL_{k}$ containing $\frac{p}{q}$. If $L\left(\tfrac{p}{q}\right)\in \cL^{*}_{k}$ then by construction we have that 
      \begin{equation*}
          L\left(\tfrac{p}{q}\right)^{\delta(n)}\cap \cC^{\tau}(N)=\emptyset \, ,
      \end{equation*}
      and so by \eqref{delta size relative to q}
      \begin{equation*}
          B\left(\frac{p}{q},c_{N}q^{-(1+\tau)}\right)\cap \cC^{\tau}(N) \subseteq B\left(\frac{p}{q},\delta(k)\right)\cap \cC^{\tau}(N) = \emptyset.
      \end{equation*}
      Thus we must have $L\left(\tfrac{p}{q}\right) \not \in \cL^{*}_{k}$. We know that $L\left(\tfrac{p}{q}\right)$ contains a rational with denominator satisfying $i)$ (the condition appearing in $\cL^{*}_{k}$), thus since $L\left(\tfrac{p}{q}\right) \not \in \cL^{*}_{k}$ it must hold that $ii)$ fails. That is
      \begin{equation*}
          B\left(\frac{p}{q},\delta(k)\right)\cap \bigcup_{I\in \cS_{\ell(k-1)}}I = \emptyset\, .
      \end{equation*}
      Since $\cC^{\tau}(N)$ is a nested Cantor set we have that
      \begin{equation*}
         \bigcup_{I\in \cS_{\ell(k-1)}}I \supseteq \cC^{\tau}(N)\, , 
      \end{equation*}
      and so, again, we are forced to conclude that
      \begin{equation*}
          B\left(\frac{p}{q},\delta(k)\right)\cap \cC^{\tau}(N) = \emptyset.
      \end{equation*}
      Appealing to \eqref{delta size relative to q} we obtain that $B\left(\tfrac{p}{q},c_{N}q^{-(1+\tau)}\right)$ has empty intersection with $\cC^{\tau}(N)$. This exhausts all possibilities for $\tfrac{p}{q}$ and so contradicts our initial assumption, thus $\cC^{\tau}(N)$ does indeed avoid all dangerous neighbourhoods of rational points.
  \end{proof}

%
%

\subsection{Lebesgue measure of \texorpdfstring{$\cC^{\tau}(N)$}{the first Cantor set}}\label{cC(N)_measure}
 The following measure theoretic properties on $\cC^{\tau}(N)$ will be required later. The first of these statements tells us that for large enough $N \in \N$ a significant proportion of $[0,1]^{d}$ is contained in $\cC^{\tau}(N)$.
 
 \begin{lemma} \label{measure_cC} 
 Suppose $\tau>\frac{1}{d}$. Then for any $\varepsilon>0$ there exists $N \in \N$ such that 
 \begin{equation*}
 \lambda_{d}(\cC^{\tau}(N)) \geq 1-\varepsilon.
\end{equation*}
Precisely,
\begin{equation*}
    \lambda_{d}(\cC^{\tau}(N)) \geq 1- 3t^{-d}\sum_{n=1}^{\infty} N^{\frac{(d+1)}{d}(n-\ell(n))}.
\end{equation*}
 \end{lemma}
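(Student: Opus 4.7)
The natural approach is to estimate directly the Lebesgue measure removed at each pruning step and sum the contributions. Since $\cC^{\tau}(N)=\bigcap_n\bigcup_{I\in\cS_{\ell(n)}}I$ is a nested intersection of unions of cubes,
\[
\lambda_d\!\left([0,1]^d \setminus \cC^{\tau}(N)\right)\;\leq\;\sum_{n=1}^{\infty}\lambda_d\!\left(\bigcup_{L\in\cL_n^*}\bigcup_{I\in\widehat{L^{\delta(n)}}}I\right),
\]
so the task reduces to bounding each term in this sum.

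For each $n$, two ingredients are required: a count of the hyperplanes in $\cL_n^*$, and a geometric bound on the volume of cubes close to a single hyperplane. For the count, since by construction each cube in $C_n$ yields at most one hyperplane, I would use $|\cL_n^*|\leq|\cL_n|\leq|C_n|=t^d N^{n(d+1)}$. For the geometric bound, each $L\in\cL_n^*$ lies in some cube $I'\in C_n$ of side $s_n:=t^{-1}N^{-n(d+1)/d}$ and is an affine subspace of dimension at most $d-1$; hence for any $\rho\leq s_n$,
\[
\lambda_d\!\left(L^{\rho}\cap I'\right)\;\leq\;C_d\,\rho\,s_n^{d-1},
\]
a routine $\ell^{\infty}$-neighbourhood estimate obtained by projecting $L\cap I'$ onto any coordinate hyperplane. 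The union of level-$\ell(n)$ cubes meeting $L^{\delta(n)}$ is contained in $L^{\delta(n)+s_{\ell(n)}}$, and \eqref{l(n)} together with the definition of $\delta(n)$ guarantees $\delta(n)\leq s_{\ell(n)}:=t^{-1}N^{-\ell(n)(d+1)/d}$, so the effective thickness is of order $s_{\ell(n)}$. Combining the count and the single-hyperplane estimate yields
\[
\lambda_d\!\left(\bigcup_{L\in\cL_n^*}\bigcup_{I\in\widehat{L^{\delta(n)}}}I\right)\;\lesssim\;t^d N^{n(d+1)}\cdot s_{\ell(n)}\cdot s_n^{d-1}\;\asymp\;N^{(d+1)(n-\ell(n))/d},
\]
with the powers of $t$ and the dimensional constants collapsing into the $3t^{-d}$ prefactor claimed in the statement.

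Finally, I would verify that the resulting series is both convergent and small. The choice of $\ell(n)$ in \eqref{l(n)} gives $\ell(n)\geq(n-1+u)(1+\tau)d/(d+1)-1$, whence
\[
\frac{(d+1)(n-\ell(n))}{d}\;\leq\;-n\!\left(1+\tau-\tfrac{d+1}{d}\right)+O_{u,\tau,d}(1),
\]
and the coefficient of $n$ is strictly negative precisely because $\tau>1/d$. Thus the series behaves geometrically, and the choice $u>3$ in \eqref{u_def} ensures that each exponent is negative for every $n\geq 1$, so every term can be made arbitrarily small by taking $N$ large, yielding $\lambda_d(\cC^{\tau}(N))\geq 1-\varepsilon$. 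The only delicate step is the $\ell^{\infty}$-neighbourhood estimate for affine subspaces of varying dimension inside small cubes; everything else is bookkeeping and an elementary convergence check.
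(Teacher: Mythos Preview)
Your argument is correct and follows essentially the same route as the paper: bound the removed measure by summing over levels, use the crude count $|\cL_n^*|\le|C_n|=t^dN^{n(d+1)}$, observe that each thickened hyperplane occupies a slab of thickness $\asymp s_{\ell(n)}$ inside a cube of side $s_n$, and simplify to the geometric series $\sum N^{(d+1)(n-\ell(n))/d}$ which converges because $\tau>1/d$. The only cosmetic difference is that the paper counts level-$\ell(n)$ cubes meeting the slab (getting $3N^{(\ell(n)-n)(d+1)(d-1)/d}$ cubes) and multiplies by their volume, whereas you estimate the slab volume directly; your $t$-bookkeeping actually gives a $t^0$ rather than the $t^{-d}$ prefactor in the stated bound, but this is irrelevant for the qualitative conclusion.
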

 
 \begin{proof}
 Observe that 
 \begin{equation*}
 \lambda_{d}(\cC^{\tau}(N)) \geq 1-\lambda_{d}\left(\bigcup_{n \in \N}\left\{ I \in C_{\ell(n)}: I \cap  \bigcup_{L \in \cL_{n}} L^{\delta(n)} \neq \emptyset \right\} \right).
 \end{equation*}
 While we may be removing significantly less cubes (recall in later layers we only neighbourhoods of hyperplanes in the reduced set $\cL_{n}^{*}$) the calculation below is sufficient. \par 
Note that $L$ is a hyperplane and the fact that the hyperplane is contained in a cube $I \in C_{n}$ of side length $t^{-1} N^{-n\frac{d+1}{d}}$. Hence the thickened hyperplane $L^{\delta(n)}$ (recall \eqref{thickened_hyperplane}) intersects at most
 \begin{equation*}
 3N^{(\ell(n)-n)\frac{(d+1)(d-1)}{d}}
 \end{equation*}
 cubes $I \in C_{\ell(n)}$. Hence
 \begin{align} \label{thick_hyperplanes_measure}
 \lambda_{d}\left(\bigcup_{n \in \N}\left\{ I \in C_{\ell(n)}: I \cap  \bigcup_{L \in \cL_{n}} L^{\delta(n)} \neq \emptyset \right\} \right) & \leq \sum_{n=1}^{\infty} \sum_{L \in \cL_{n}} \lambda_{d}\left( \left\{ I \in C_{\ell(n)}: I \cap L^{\delta(n)} \neq \emptyset \right\}\right), \nonumber \\
 & \leq \sum_{n=1}^{\infty} N^{n(d+1)}3N^{(\ell(n)-n)\frac{(d+1)(d-1)}{d}} \lambda_{d}(I), \nonumber \\
 & \leq 3t^{-d}\sum_{n=1}^{\infty} N^{n(d+1)+(\ell(n)-n)\frac{(d+1)(d-1)}{d}-\ell(n)(d+1)}, \nonumber \\
 & \leq 3t^{-d}\sum_{n=1}^{\infty} N^{\frac{(d+1)}{d}(n-\ell(n))} \, = \varepsilon_{N}.
 \end{align}
 Thus, providing $\tau>\frac{1}{d}$ (and so $\ell(n)-n>\rho n$ for some $\rho>0$), the above summation is convergent, and for a suitably large choice of $N$ we have that
 \begin{equation*}
 \lambda_{d}(\cC^{\tau}(N)) \geq 1-\varepsilon_{N}.
 \end{equation*}
 with $\varepsilon_{N} \to 0$ as $N \to \infty$.
 \end{proof}
 
 The following lemma gives us an even stronger statement than Lemma~\ref{measure_cC}, namely that every surviving cube in the construction $\cC^{\tau}(N)$ retains much of its mass when later layers are removed.
 
 \begin{lemma} \label{measure_cC_cubes}
 Suppose $\tau >\frac{1}{d}$. Let $n, N\in \N$. Then for all $I \in \cS_{\ell(n)}$, 
 \begin{equation*}
      \lambda_{d}(I \cap \cC^{\tau}(N)) \geq \left(1-3\left( \sum_{k=n+1}^{\ell(n)}N^{(\ell(n)-\ell(k))\frac{d+1}{d}} + \sum_{k=\ell(n)+1}^{\infty} N^{(k-\ell(k))\frac{d+1}{d}} \right) \right) \lambda_{d}(I).
 \end{equation*}
 \end{lemma}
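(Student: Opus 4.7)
The plan is to localise the calculation of Lemma~\ref{measure_cC} to the cube $I$. By the nested structure of the Cantor construction, the mass $\lambda_d(I\setminus \cC^\tau(N))$ inside $I$ is lost only at subsequent pruning stages $m\geq n+1$, so
\[
\lambda_d(I\setminus \cC^\tau(N))\;\leq\;\sum_{m=n+1}^{\infty}\lambda_d\!\left(I\cap\bigcup_{L\in\cL_m^{*}}\widehat{L^{\delta(m)}}\right),
\]
and since $\cL_m^{*}\subseteq \cL_m$ it suffices to estimate the same sum with $\cL_m$ in place of $\cL_m^{*}$. The natural break point is $m=\ell(n)$, according to whether the cubes $I'\in C_m$ that host the hyperplanes of $\cL_m$ are larger or smaller than $I$.

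Write $s_j := t^{-1}N^{-j(d+1)/d}$ for the sidelength of a level-$j$ cube. In the range $n+1\leq m\leq \ell(n)$, the cube $I$ is contained in a unique $I'\in C_m$, and the simplex lemma provides at most one hyperplane $L\in\cL_m$ in $I'$. The key geometric observation is that the trace $L\cap I$ has extent at most $s_{\ell(n)}$ in the $d-1$ tangent directions (not $s_m$, which would be the naive estimate from $I'$). Combined with $\delta(m)\leq s_{\ell(m)}$ (a direct consequence of $\ell(m)\leq (m-1+u)(1+\tau)d/(d+1)$), the number of cubes of $C_{\ell(m)}$ inside $I$ meeting $L^{\delta(m)}$ is at most $3N^{(\ell(m)-\ell(n))(d+1)(d-1)/d}$. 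Multiplying by the cube measure $t^{-d}N^{-\ell(m)(d+1)}$ and dividing by $\lambda_d(I)=t^{-d}N^{-\ell(n)(d+1)}$ gives a fractional contribution of $3N^{(\ell(n)-\ell(m))(d+1)/d}$ per stage, producing the first sum.

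In the range $m>\ell(n)$, the roles are reversed: cubes of $C_m$ are finer than $I$, and exactly $N^{(m-\ell(n))(d+1)}$ of them fit inside $I$. Each hosts at most one hyperplane of $\cL_m$, whose $\delta(m)$-thickening meets at most $3N^{(\ell(m)-m)(d+1)(d-1)/d}$ cubes of $C_{\ell(m)}$, exactly as in the proof of Lemma~\ref{measex_cC}. Multiplying out the three factors and normalising by $\lambda_d(I)$, the powers of $N$ telescope to $(m-\ell(m))(d+1)/d$, giving the second sum. Adding the two pieces and subtracting from $\lambda_d(I)$ yields the stated inequality.

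The main obstacle is the geometric estimate in Case 1: one must exploit the fact that the hyperplane $L$ is relevant to $I$ only along its trace inside $I$, so the transverse extent of $L^{\delta(m)}\cap I$ is bounded by $s_{\ell(n)}$ rather than by $s_m$. Without this refinement the Case 1 contribution would be too weak to sum. The rest is bookkeeping of exponents, using the fact that $(m-1+u)(1+\tau)$ agrees with $\ell(m)(d+1)/d$ up to the floor in the definition of $\ell(m)$.
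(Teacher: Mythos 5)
Your proposal is correct and follows essentially the same route as the paper: the loss of mass in $I$ is attributed to stages $k\geq n+1$, the sum is split at $k=\ell(n)$, and in the first range the count of level-$\ell(k)$ cubes of $I$ meeting $L^{\delta(k)}$ is bounded by $3N^{(\ell(k)-\ell(n))(d+1)(d-1)/d}$ using precisely the observation you flag (that the relevant extent of the hyperplane is that of $I$, not of the larger host cube $I'\in C_k$), while in the second range the factors $N^{(k-\ell(n))(d+1)}$ and $3N^{(\ell(k)-k)(d+1)(d-1)/d}$ combine exactly as in Lemma~\ref{measure_cC}. The exponent bookkeeping in both ranges matches the paper's, so no gap.
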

 \begin{remark} \rm Note the constant bound given here is larger than the constant proven in Lemma~\ref{measure_cC} for all $n \in \N$. So we could take $\varepsilon_{N}$ (given by \eqref{thick_hyperplanes_measure}) to be the universal constant for all surviving cube $I \in \bigcup _{n \in \N} \cS_{\ell(n)}$.
 \end{remark}
 
 \begin{proof} The proof is similar to that of Lemma~\ref{measure_cC}. Firstly, since $I \in \cS_{\ell(n)}$ we clearly have that
 \begin{equation*}
 I \not \in \bigcup_{k=1}^{n} \left\{ I' \in C_{\ell(k)}: I' \cap \bigcup_{L \in \cL_{k}} L^{\delta(k)} \neq \emptyset \right\}.
 \end{equation*}
 and so
 \begin{equation*}
 \lambda_{d}\left(I \cap \bigcup_{k=1}^{n} \bigcup_{I' \in \cS_{\ell(k)}} I'\right)=\lambda_{d}(I).
 \end{equation*}
 Hence
 \begin{equation*}
 \lambda_{d}(I \cap \cC^{\tau}(N)) \geq \lambda_{d}(I) - \lambda_{d}\left( \bigcup_{k=n}^\infty \left\{ I' \in C_{\ell(k)}(I):I' \cap \bigcup_{L \in \cL_{k}} L^{\delta(k)} \neq \emptyset \right\} \right).
 \end{equation*}
 To count the number of hyperplanes from $\bigcup_{k >n}\bigcup_{L \in \cL_{k}} L$ contained in $I$ observe that
 \begin{equation} \label{hyperplanes_1}
 \#\left\{L \in \cL_{k}: I \cap L^{\delta(k)} \neq \emptyset \right\} \leq 1 \quad \quad n+1 \leq k \leq \ell(n),
 \end{equation}
since $I$ intersects one unique cube $I' \in C_{k}$ for $n+1 \leq k \leq \ell(n)$ (in this case $I$ is actually contained in such cube). And that 
\begin{equation} \label{hyperplanes_2}
 \#\left\{L \in \cL_{k}: I \cap L^{\delta(k)} \neq \emptyset \right\} \leq N^{(k-\ell(n))(d+1)} \quad \quad k \geq \ell(n)+1,
 \end{equation}
 since we are now looking at the number of cubes $I' \in C_{k}(I)$ that generate hyperplanes. \par 
 Now we consider the number of subcubes $I' \in C_{\ell(k)}(I)$ that could intersect with a thickened hyperplane $L \in \cL_{k}$. We have that
 \begin{equation} \label{hyperplane_interesect_1}
 \#\{I' \in C_{\ell(k)}(I) : I' \cap L^{\delta(k)} \neq \emptyset\} \leq 3N^{(\ell(k)-\ell(n))\frac{(d+1)(d-1)}{d}} \quad \quad \text{ if } L \in \cL_{k} \text{ for } n+1 \leq k \leq \ell(n),
 \end{equation}
 since there are at most $N^{(\ell(k)-\ell(n))(d+1)}$ $\ell(k)$-level cubes in $I$. Also,
 \begin{equation} \label{hyperplane_intersect_2}
 \#\{I' \in C_{\ell(k)}(I) : I' \cap L^{\delta(k)} \neq \emptyset\} \leq 3N^{(\ell(k)-k)\frac{(d+1)(d-1)}{d}} \quad \quad \text{ if $L \in \cL_{k}$ for $k \geq \ell(n)+1$},
 \end{equation}
 since each hyperplane of interest is now contained in some cube in $C_{k}$. \par 
 Bring these together we have that
 \begin{align*}
 \lambda_{d}\left( \bigcup_{k \in \N_{> n}} \left\{ I' \in C_{\ell(k)}(I):I' \cap \bigcup_{L \in \cL_{k}} L^{\delta(k)} \neq \emptyset \right\} \right) & \\
 & \hspace{-5cm} \leq \sum_{k=n+1}^{\infty} \, \, \,  \sum_{L \in \cL_{k}:L \cap I \neq \emptyset} \lambda_{d}\left( \left\{ I' \in C_{\ell(k)}(I) : I' \cap L^{\delta(k)} \neq \emptyset \right\}\right), \\
 & \hspace{-6cm} \overset{\eqref{hyperplanes_1}-\eqref{hyperplane_intersect_2}}{\leq} \sum_{k=n+1}^{\ell(n)} 3t^{-d}N^{(\ell(k)-\ell(n))\frac{(d+1)(d-1)}{d}} N^{-\ell(k)(d+1)} \quad  + \\
 & \hspace{-2cm} \sum_{k=\ell(n)+1}^{\infty} 3t^{-d}N^{(k-\ell(n))(d+1)} N^{(\ell(k)-k)\frac{(d+1)(d-1)}{d}} N^{-\ell(k)(d+1)} \\
 & \hspace{-7cm}   = t^{-d}N^{-\ell(n)(d+1)}3\left( \sum_{k=n+1}^{\ell(n)}N^{(\ell(n)-\ell(k))\frac{d+1}{d}} + \sum_{k=\ell(n)+1}^{\infty} N^{(k-\ell(k))\frac{d+1}{d}} \right).
 \end{align*}
 And so we have that for $I \in \cS_{\ell(n)}$
 \begin{equation*}
 \lambda_{d}(I \cap \cC^{\tau}(N)) \geq \left( 1-3\left( \sum_{k=n+1}^{\ell(n)}N^{(\ell(n)-\ell(k))\frac{d+1}{d}} + \sum_{k=\ell(n)+1}^{\infty} N^{(k-\ell(k))\frac{d+1}{d}} \right) \right)\lambda_{d}(I).
 \end{equation*}
 
 \end{proof}

%
%

\section{Step 2: A suitable subset of \texorpdfstring{$\W_{d}(\psi_{\tau})$}{well approximable points}} \label{subset_W(psi)}

Recall we want to construct a subset of $\W_{d}(\psi_{\tau})$ such that any ball in the corresponding $\limsup$ set retains enough mass when intersected with $\cC^{\tau}(N)$. To this end we construct the following subset of $\Q^{d}$. 
Define
\begin{equation*}
    Q(N,\tau):=\left\{ \frac{p}{q} \in \Q^{d}: \exists \, L\in\cL^*_{n} \, \text{ with } \,  \frac{p}{q}\in L \quad \text{ and } \begin{cases} i)\,  N^{n-1} \leq q \leq N^{n} \, , \\
    ii) \, B\left(\tfrac{p}{q},\delta(n)\right)\cap \bigcup_{I\in \cS_{\ell(n-1)}}I \neq \emptyset\, .
    \end{cases}
    \right\}\, .
\end{equation*}
We say $\tfrac{p}{q}$ is a \textit{leading rational of a hyperplane $L \in \cL_{n}$} if $L$ is the hyperplane on which $\tfrac{p}{q}$ lies while satisfying the above conditions. \par
Essentially, for each hyperplane removed in the construction of $\cC^{\tau}(N)$ we associate a set of rational points $\tfrac{p}{q}$ and significant intersection with surviving cubes of the previous layer. \par 
Note a few important properties of leading rationals:
\begin{enumerate}
    \item Every hyperplane $L\in \bigcup_{n\in\N}\cL^{*}_{n}$ contains at least one leading rational. This is clear from how we define the sets $\cL^{*}_{n}$ and $Q(N,\tau)$. Namely, if $L$ does not contain a leading rational, then it would not have been removed.
    \item If $\tfrac{p}{q}$ is a leading rational of a hyperplane $L \in \cL^{*}_{n}$ that was constructed in cube $I\in C_{n}$ then
    \begin{equation*}
        B\left(\tfrac{p}{q}, 2q^{-1-\frac{1}{d}}\right) \supseteq I\, .
    \end{equation*}
    To see this observe that trivially $\tfrac{p}{q} \in I$, that the sidelenght of $I$ is $N^{-n(1+\tfrac{1}{d})}$, and that $N^{n-1}\leq q \leq N^{n}$.
    \item If $\tfrac{p}{q}$ is not a leading rational then there exists a hyperplane $L\in \bigcup_{n\in\N}\cL^{*}_{k}$, say $L\in \cL^{*}_{n}$, for which $\tfrac{p}{q} \in \widehat{L^{\delta(k)}}$ and any leading rational of $L$, say $\tfrac{r}{s}$, has $s<q$. This may not be so obvious. To see this, suppose $N^{n-1}< q \leq N^{n}$. Then $\tfrac{p}{q}$ must, by the simplex lemma, lie on a hyperplane of $\cL_{n}$. If $\tfrac{p}{q}$ is not a leading rational of said hyperplane, then it must lie in some previous level removed strip. Any leading rational, say $\frac{r}{s}$, of a hyperplane from a previously removed layer must, by definition, have denominator $s<N^{n-1}<q$. 
\end{enumerate}

 Let 
\begin{equation*}
\W_{d}(Q(N,\tau), \psi):= \left\{ \x \in [0,1]^{d} : \max_{1 \leq i \leq d}\left| x_{i}-\frac{p_{i}}{q} \right| < \frac{\psi(q)}{q} \quad \text{ for i.m. } \, \, \frac{p}{q} \in Q(N,\tau) \right\}.
\end{equation*}
The following two lemmas are crucial properties of the set $Q(N,\tau)$. 

\begin{lemma} \label{Q_approximations}
Let $\psi(q)=3q^{-\frac{1}{d}}$ and suppose that $\tau>\frac{1}{d}$. Then
\begin{equation*}
\W_{d}(Q(N,\tau), \psi)\supseteq \cC^{\tau}(N)\, .
\end{equation*}
\end{lemma}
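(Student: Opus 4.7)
The plan is to combine Dirichlet's theorem with property~3 of leading rationals (from the numbered list preceding the statement) to produce, for each $x \in \cC^\tau(N)$, infinitely many $\tfrac{r}{s} \in Q(N,\tau)$ satisfying $\|x - r/s\| < 3 s^{-1-1/d} = \psi(s)/s$. Since $x \in \cC^\tau(N)$ avoids every dangerous ball, $x$ cannot be a rational point, so Dirichlet's theorem supplies infinitely many $(p_m, q_m) \in \Z^d \times \N$ with $q_m \to \infty$ and $\|x - p_m/q_m\| < q_m^{-1-1/d}$. Let $n_m$ be the unique positive integer with $N^{n_m - 1} \leq q_m < N^{n_m}$. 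If $\tfrac{p_m}{q_m}$ is already a leading rational, set $(r_m, s_m) := (p_m, q_m)$, giving $s_m = q_m$ and $\|x - r_m/s_m\| < q_m^{-1-1/d} < 3 s_m^{-1-1/d}$. Otherwise, property~3 furnishes a hyperplane $L \in \cL^*_j$ with $j \leq n_m - 1$, lying inside a cube $I_j \in C_j$, such that $\tfrac{p_m}{q_m} \in \widehat{L^{\delta(j)}} \subseteq I_j$ and $L$ carries a leading rational $\tfrac{r_m}{s_m}$ with $s_m < N^j$; in particular, both $\tfrac{p_m}{q_m}$ and $\tfrac{r_m}{s_m}$ lie in the same cube $I_j$.

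In this second case, the supremum-norm diameter of $I_j$ equals its sidelength $t^{-1} N^{-j(d+1)/d}$, so the triangle inequality gives
\[
\|x - r_m/s_m\| \leq \|x - p_m/q_m\| + \|p_m/q_m - r_m/s_m\| < q_m^{-1-1/d} + t^{-1} N^{-j(d+1)/d}.
\]
Since $q_m \geq N^{n_m - 1} \geq N^j$, the first summand is at most $N^{-j(d+1)/d}$, whence
\[
\|x - r_m/s_m\| < (1 + t^{-1}) N^{-j(d+1)/d} < 3 N^{-j(d+1)/d} < 3 s_m^{-1-1/d},
\]
using $t > (d!)^{1/d} \geq 1$ and $s_m < N^j$. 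Thus $\tfrac{r_m}{s_m} \in Q(N,\tau)$ witnesses the required approximation.

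The final and most delicate step is checking that the collection $\{(r_m, s_m)\}_m$ is genuinely infinite, so that $x$ lies in the $\limsup$ set. Whenever Case~1 occurs infinitely often this is immediate since $s_m = q_m \to \infty$. The main obstacle is to rule out persistent Case~2: if only finitely many distinct $(r_m, s_m)$ arose, some pair $(r^*, s^*)$ would recur for infinitely many $m$, trapping all the associated $\tfrac{p_m}{q_m}$ inside a single fixed cube $I^* \in C_{\ell(j^*)}$ of the removed strip. Combined with $\tfrac{p_m}{q_m} \to x$ and the fact that $x$ sits in a surviving cube $\tilde{I} \in \cS_{\ell(j^*)}$ distinct from $I^*$, this forces $x$ onto a grid face of the construction. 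I would close this case either by arguing that $\cC^\tau(N)$ cannot contain such boundary overlaps (exploiting that $x$ avoids dangerous balls around nearby low-denominator rationals), or by refining the Dirichlet construction to work at strictly deeper scales each time stagnation threatens, so that genuinely new leading rationals emerge along the sequence.
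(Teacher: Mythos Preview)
Your approach matches the paper's: Dirichlet, then property~3, then the triangle inequality to land on a leading rational near $x$. Your Case~2 distance estimate is in fact more explicit than the paper's (which simply writes $|x-r/s|<3s^{-1-1/d}$ after the triangle inequality). One minor slip: from condition~$i)$ in the definition of $Q(N,\tau)$ you only get $s_m\le N^j$, not $s_m<N^j$, but this is harmless since $1+t^{-1}<3$ already makes the chain of inequalities strict.

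The genuine gap is exactly where you flag it: you do not close the infiniteness argument. Your first proposed closure (forcing $x$ onto a grid face and then ruling this out via dangerous balls around low-denominator rationals) heads in a misleading direction --- a grid face at level $\ell(j^*)$ is just a coordinate hyperplane of the dyadic-type partition and has no direct relation to dangerous balls. Your second proposal is too vague to assess.

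The paper closes this step cleanly and it is the idea you are missing. Assume the hyperplanes associated to the $\tfrac{p_m}{q_m}$ come from only finitely many levels, with maximum level $m$. Each $\widehat{L^{\delta(j)}}$ is a finite union of closed cubes, so
\[
[0,1]^d\setminus\bigcup_{j\le m}\bigcup_{L\in\cL^*_j}\widehat{L^{\delta(j)}}
\]
is open, and $x\in\cC^\tau(N)$ lies in it; hence $\operatorname{dist}\bigl(x,\ \bigcup_{j\le m}\bigcup_{L}\widehat{L^{\delta(j)}}\bigr)>\varepsilon$ for some $\varepsilon>0$. Any Dirichlet approximant $\tfrac{u}{v}$ with $v$ large enough satisfies $\|x-\tfrac{u}{v}\|<\varepsilon$, so $\tfrac{u}{v}$ lies outside every removed strip of level $\le m$. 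By property~3 such $\tfrac{u}{v}$ is then either already a leading rational (contradicting that only finitely many such arise), or lies in a removed strip of level $>m$ (contradicting the maximality of $m$). Either way one gets the infinite supply of leading rationals. This openness/distance argument replaces the boundary discussion you were attempting.
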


\begin{proof}
Take any $\x \in \cC^{\tau}(N)$. By Dirichlet's Theorem $\x \in \W_{d}(\psi)$. Let $A(\x) $ 
be the set of all those rational points for which
\begin{equation*}
\x \in B\left(\frac{p}{q}, q^{-1-\frac{1}{d}}\right).
\end{equation*}
If the cardinality of $A(\x) \cap Q(N, \tau)$ is infinite, then clearly $\x \in \W_{d}(Q(N,\tau), \psi)$, so assume $A(\x) \cap Q(N,\tau)$ is finite. Let $\frac{p}{q} \in A(\x) \backslash Q(N,\tau)$. Let $N^{m-1}<q\le N^m$. Since $\tfrac{p}{q}$ is not a leading rational, by $3.$ there exists some hyperplane $L$, say $L\in\cL^{*}_{n}$ with $n<m$, such that $\tfrac{p}{q} \in \widehat{L^{\delta(n)}}$. Let $\tfrac{r}{s}$ be a leading rational of $L$. Then by property $3.$ of leading rational $s<q$, and by the triangle inequality
\begin{equation} \label{eq1}
    \left|\x-\frac{r}{s}\right| \leq \left|\x-\frac{p}{q}\right|+ \left|\frac{p}{q}-\frac{r}{s}\right|< 3s^{-1-\tfrac{1}{d}}.
\end{equation}
If the sequence of rational points in $A(\x)$ can be associated to infinitely many different leading rationals, then we are done by the above argument. We now prove that there must be hyperplanes $L\in \cL_n^*$ from arbitrarily high levels $n$ associated to the points in the sequence $A(\x)$. That is, they are leading rationals on these hyperplanes themselves, or we find a nearby leading rational by the above argument.

Assume to the contrary that the sequence $A(\x)$ is associated to finitely many hyperplanes. Suppose $m$ is the largest level in which this finite sequence of hyperplanes appear. By definition of $\x \in \cC^{\tau}(N)$ we have that
\begin{equation*}
    \x \in [0,1]^{d} \backslash \left( \bigcup_{j\leq m} \bigcup_{L\in \cL^{*}_{j}} \widehat{L^{\delta(j)}}\right)\, ,
\end{equation*}
which is an open set, and so there exists some $\varepsilon>0$ such that
\begin{equation*}
    d\left(\x, \bigcup_{j\leq m} \bigcup_{L\in \cL^{*}_{j}} \widehat{L^{\delta(j)}}\right)>\varepsilon\, .
\end{equation*}
However, for any $\frac{u}{v}\in A(\x)$ with $v>\varepsilon^{-1}$ we have that
\begin{equation*}
\left| \x -\frac{u}{v}\right|<v^{-1-\tfrac{1}{d}}<\varepsilon\, ,
\end{equation*}
and so
\begin{equation*}
    \frac{u}{v} \not \in \bigcup_{j\leq m} \bigcup_{L\in \cL^{*}_{j}} \widehat{L^{\delta(j)}}\, ,
\end{equation*}
contradicting the finiteness of the sequence of associated hyperplanes. Thus we have an infinite sequence of associated hyperplanes to $A(\x)$, which by \eqref{eq1} we can associate to an infinite sequence of rational points which sufficiently approximate $\x$ so that $\x \in \W_{d}(Q(N,\tau),\psi)$ as required.

\end{proof}

Lastly, the following lemma on the measure of balls in the construction of $\W_{d}(Q(N,\tau),\psi_{\tau})$ will be required later. It essentially tells us that each ball $B\left(\frac{p}{q},q^{-1-\tau}\right)$ with $\frac{p}{q}\in Q(N,\tau)$ contains a surviving cube that covers a constant proportion of $B\left(\frac{p}{q},q^{-1-\tau}\right)$ independent of $\frac{p}{q}$.

  \begin{lemma} \label{I in B(t)}
Let $B(\tau)=B\left(\tfrac{p}{q},q^{-1-\tau}\right)$ for some $\frac{p}{q}\in Q(N,\tau)$. Then there exists $I \in \bigcup_{n\in \N} \cS_{\ell(n)}$ with $I \subset B(\tau)$ and
\begin{equation*}
    \lambda_{d}(B(\tau))\leq C\lambda_{d}(I),
\end{equation*}
with $C>0$ independent of $\frac{p}{q}$.
\end{lemma}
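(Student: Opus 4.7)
The plan is to take $I$ to be the surviving cube at level $\ell(n-1)$ supplied by condition \textit{ii)} in the definition of $Q(N,\tau)$, and check that the numerology of the construction already forces it to lie inside $B(\tau)$ at a comparable scale.

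Fix $\tfrac{p}{q}\in Q(N,\tau)$; by definition there is an $n\in\N$ with $N^{n-1}\le q\le N^n$ and a cube $I\in\cS_{\ell(n-1)}$ meeting $B(\tfrac{p}{q},\delta(n))$. Let $s=t^{-1}N^{-\ell(n-1)(d+1)/d}$ be its side length, so $\lambda_d(I)=s^d$. Since $I$ contains a point within sup-distance $<\delta(n)$ of $\tfrac{p}{q}$ and has sup-diameter $s$, the triangle inequality shows every point of $I$ lies within distance $\delta(n)+s$ of $\tfrac{p}{q}$; hence the conclusion reduces to the twin bounds
\begin{equation*}
\delta(n)+s<q^{-1-\tau}\qquad\text{and}\qquad s\ge c\,q^{-1-\tau}
\end{equation*}
for some constant $c=c(N,t,u,d,\tau)>0$.

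Both are a matter of tracking exponents. From $\ell(m)=\lfloor(m-1+u)(1+\tau)d/(d+1)\rfloor$, applying the floor one-sided in each direction yields
\begin{equation*}
t^{-1}N^{-(n-2+u)(1+\tau)}\le s<t^{-1}N^{-(n-2+u)(1+\tau)+(d+1)/d},
\end{equation*}
while $N^{n-1}\le q\le N^n$ gives $N^{-n(1+\tau)}\le q^{-1-\tau}\le N^{-(n-1)(1+\tau)}$. Combining leads to
\begin{equation*}
\frac{\delta(n)}{q^{-1-\tau}}\le t^{-1}N^{(1-u)(1+\tau)}\quad\text{and}\quad\frac{s}{q^{-1-\tau}}\le t^{-1}N^{(2-u)(1+\tau)+(d+1)/d}.
\end{equation*}
The hypothesis $\tau>1/d$ makes the critical quantity $(d+1)/d-(1+\tau)=1/d-\tau$ strictly negative, so for $u\ge 3$ the exponent $(2-u)(1+\tau)+(d+1)/d\le 1/d-\tau$ is negative; both ratios are therefore bounded by $t^{-1}\le 1/2$, whence $\delta(n)+s<q^{-1-\tau}$ and $I\subset B(\tau)$. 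The matching lower estimate $s/q^{-1-\tau}\ge t^{-1}N^{(1-u)(1+\tau)}$ is immediate from the left-hand bound on $s$; since $\lambda_d(B(\tau))=(2q^{-1-\tau})^d$, this gives $\lambda_d(B(\tau))\le 2^d t^d N^{d(u-1)(1+\tau)}\lambda_d(I)$ with constant independent of $\tfrac{p}{q}$.

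The only conceptual point is that the level $\ell(n-1)$ has been engineered precisely so that its cubes sit at scale $N^{-(n-2+u)(1+\tau)}$, comfortably smaller than $q^{-1-\tau}$ but only by a bounded factor depending on $N,u,\tau,d$. There is no geometric obstruction to overcome; the main effort is careful bookkeeping with the floor in the definition of $\ell$, combined with the condition $\tau>1/d$ which guarantees the key exponent $1/d-\tau$ is negative.
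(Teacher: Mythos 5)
Your proof is correct and follows essentially the same route as the paper's: both take the surviving cube $I\in\cS_{\ell(n-1)}$ furnished by condition \textit{ii)} in the definition of $Q(N,\tau)$, verify $I\subset B(\tau)$ by comparing $\delta(n)$ and the side length of $I$ with $q^{-1-\tau}$ via the floor in $\ell(\cdot)$ and the hypothesis $\tau>1/d$, and extract the same constant $C=2^d t^d N^{(u-1)(1+\tau)d}$. No gaps.
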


\begin{proof}
    Suppose that $N^{n-1}\leq q< N^{n}$. Since $\frac{p}{q} \in Q(N,\tau)$ we have that
    \begin{equation*}
        B\left(\frac{p}{q}, \delta(n)\right)\cap \bigcup_{I\in \cS_{\ell(n-1)}}I \neq \emptyset\, .
    \end{equation*}
    Let $I^{*}\in \cS_{\ell(n-1)}$ be some cube with positive intersection with $B\left(\tfrac{p}{q},\delta(n)\right)$. Then, by considering the sidelength of $I^{*}$ we have that
    \begin{equation*}
        B\left(\frac{p}{q}, 3\max\left\{t^{-1}N^{-\ell(n-1)\left(1+\tfrac{1}{d}\right)},\delta(n)\right\} \right) \supseteq I^{*}\, .
    \end{equation*}
    Now observe that
    \begin{align*}
        3t^{-1}N^{-\ell(n-1)\left(1+\frac{1}{d}\right)} &\leq 3t^{-1}N^{-\left\lfloor(n-2+u)\frac{(1+\tau)d}{(d+1)}\right\rfloor\frac{d+1}{d}} \\
        &\leq 3t^{-1}N^{-(n-1+u)\frac{(1+\tau)d}{(d+1)}\frac{d+1}{d}}\,\,\, \,\,\,(=3\delta(n))\\
        &=3t^{-1}N^{-(u-1)(1+\tau)}N^{-n(1+\tau)}\\
        &<q^{-(1+\tau)}\, .
    \end{align*}
    Hence 
    \begin{equation*}
        B\left(\frac{p}{q},q^{-(1+\tau)}\right) \supseteq B\left(\frac{p}{q}, 3\max\left\{t^{-1}N^{-\ell(n-1)(1+\tfrac{1}{d})},\delta(n)\right\} \right) \supseteq I^{*}\, .
    \end{equation*}
    It remains to see that 
    \begin{align*}
        \lambda_{d}(I^{*})&= t^{-d}N^{-\ell(n)(d+1)}\\
        &\geq t^{-d}N^{-(n-2+u)(1+\tau)d}\\
        & \geq t^{-d}N^{-(u-1)(1+\tau)d}q^{-(1+\tau)d}\\
        &=2^{-d}t^{-d}N^{-(u-1)(1+\tau)d} \lambda_{d}\left(B\left(\frac{p}{q},q^{-(1+\tau)}\right)\right)\, ,
    \end{align*}
    and so taking $C=2^{d}t^{d}N^{(u-1)(1+\tau)d}$ in the lemma completes the proof.
    \end{proof}

%
%

\section{Step 3: Construction of a Cantor subset of \texorpdfstring{$\Bad_{d}(\psi_{\tau})$}{certain badly approximable sets}} \label{cD(B)}

The following lemma is crucial in the construction of our Cantor subset of $\Bad_{d}(\psi_{\tau})$.

\begin{lemma}[$T_{G,I}$ Lemma] \label{TGI}
Let $\{B_{i}\}_{i \in \N}$ be a sequence of balls in $\R^{d}$ with $r(B_{i}) \to 0$ as $i \to \infty$. Suppose that there exists some constant $C>0$ such that 
\begin{equation} \label{partial_measure}
\lambda_{d}\left( I \cap \limsup_{i \to \infty} B_{i}\right)\geq C \lambda_{d}(I)
\end{equation}
for any $I \in \bigcup_{n \in \N} \cS_{\ell(n)}$. Then for any $I \in \bigcup_{n \in \N} \cS_{\ell(n)}$ and any $G>1$ there is a finite subcollection
\begin{equation*}
T_{G,I} \subset \{B_{i}: i \geq G\}
\end{equation*}
such that the balls are disjoint, lie inside $I$, and 
\begin{equation*}
\lambda_{d}\left( \bigcup_{\tilde{B} \in T_{G,I}}\tilde{B}\right) \geq \kappa_{1} \lambda_{d}(I),
\end{equation*}
with $\kappa_{1}=C5^{-d}4^{-1}$.
\end{lemma}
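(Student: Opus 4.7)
The plan is to adapt the classical Vitali $5r$-covering argument, tailored so that the resulting disjoint balls sit inside the prescribed surviving cube $I$. The key feature enabling this restriction is the hypothesis $r(B_{i})\to 0$: it guarantees that for $i$ sufficiently large, any $B_{i}$ meeting a fixed compact subset of the interior of $I$ must in fact lie inside $I$.

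First, fix $I\in\bigcup_{n\in\N}\cS_{\ell(n)}$ and $G>1$, and introduce the subfamily
$$\cF:=\{B_{i}:\ i\ge G,\ B_{i}\subset I\}.$$
I claim that $\lambda_{d}\bigl(\bigcup_{B\in\cF}B\bigr)\ge C\lambda_{d}(I)$. Indeed, pick any $x\in\limsup_{i\to\infty}B_{i}$ lying in the interior of $I$, so $\dist(x,\partial I)>0$. Since $x$ belongs to $B_{i}$ for infinitely many $i$ and $r(B_{i})\to 0$, for all sufficiently large such $i$ we have $B_{i}\subset I$; in particular we may choose one with $i\ge G$, whence $x\in\bigcup_{B\in\cF}B$. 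Thus $\bigcup_{B\in\cF}B$ contains $(\limsup_{i\to\infty} B_{i})\cap I$ up to the measure-zero boundary of $I$, and the claim follows from the hypothesis \eqref{partial_measure}.

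Next, I would apply the standard $5r$-covering lemma to $\cF$ (the radii are bounded by the side-length of $I$) to produce a countable disjoint subcollection $\cF'\subset\cF$ with
$$\bigcup_{B\in\cF}B\ \subset\ \bigcup_{B\in\cF'}5B,$$
where $5B$ denotes the concentric dilation by factor $5$. It follows that
$$5^{d}\sum_{B\in\cF'}\lambda_{d}(B)\ \ge\ \lambda_{d}\Bigl(\bigcup_{B\in\cF'}5B\Bigr)\ \ge\ C\lambda_{d}(I),$$
so $\sum_{B\in\cF'}\lambda_{d}(B)\ge C5^{-d}\lambda_{d}(I)$.

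Finally, since the above sum converges, truncate to a sufficiently large finite subcollection $T_{G,I}\subset\cF'$ satisfying $\sum_{B\in T_{G,I}}\lambda_{d}(B)\ge \frac{1}{2}C5^{-d}\lambda_{d}(I)>\kappa_{1}\lambda_{d}(I)$; disjointness then yields the desired bound on $\lambda_{d}(\bigcup T_{G,I})$. The main obstacle is the very first step: without the assumption $r(B_{i})\to 0$, the mass of $(\limsup B_{i})\cap I$ could, in principle, all be contributed by balls straddling $\partial I$, leaving no disjoint subcollection genuinely inside $I$ at all. Once this reduction to $\cF$ is secured, the Vitali-type covering argument and the passage to a finite subcollection are entirely standard.
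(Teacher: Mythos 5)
Your proof is correct, and the overall skeleton (restrict to balls with $i\ge G$ lying inside $I$, apply the $5r$-covering lemma, then truncate the convergent series of measures to a finite disjoint subcollection) is the same as the paper's. The one place you diverge is in how you guarantee that the selected balls lie inside $I$: the paper first passes to the union of next-level surviving subcubes $\cS^{*}_{\ell(n+1)}(I)$ contained in the interior of $I$, applies the hypothesis \eqref{partial_measure} to each such subcube, and accepts a factor-$\tfrac12$ loss from discarding the edge cubes; you instead argue pointwise that any $x\in(\limsup_i B_i)\cap \operatorname{int}(I)$ lies in some $B_i\subset I$ with $i\ge G$ because $r(B_i)\to 0$, so that $\bigcup_{B\in\cF}B$ already captures all of $(\limsup_i B_i)\cap I$ up to the null boundary. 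Your route is more elementary (it uses nothing about the Cantor construction beyond $\partial I$ being Lebesgue-null and \eqref{partial_measure} for $I$ itself), avoids the extra appeal to \eqref{partial_measure} on subcubes, and in fact yields the better constant $C5^{-d}2^{-1}$, which comfortably exceeds the stated $\kappa_1=C5^{-d}4^{-1}$. Both arguments are valid; the paper's version costs an extra factor of $2$ but keeps the selected balls anchored to interior subcubes of the next generation, a feature that is not actually needed elsewhere in the paper.
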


\begin{remark} \rm 
Note that Lemma~\ref{TGI} is applicable to our setting with
\begin{equation*}
    \{B_{i}\}_{i \in \N}=\left\{ B\left(\frac{p}{q}, 2q^{-1-\frac{1}{d}} \right) \right\}_{\frac{p}{q} \in Q(N,\tau)}\, ,
\end{equation*}
i.e. $\limsup_{i \to \infty} B_{i}=\W_{d}\left(Q(N, \tau), 2 \psi_{1/d}\right)$, since
\begin{align*}
    \lambda_{d}\left(I \cap \W_{d}\left(Q(N, \tau), 2\psi_{1/d}\right) \right) & \overset{\text{ Lemma~\ref{Q_approximations}}}{\geq} \lambda_{d}\left(I \cap \cC^{\tau}(N) \cap \W_{d}(\psi_{1/d}) \right) \\
    & = \lambda_{d}\left(I \cap \cC^{\tau}(N) \right)\\
    & \overset{\text{ Lemma~\ref{measure_cC_cubes}}}{\geq} (1-\varepsilon_{N})\lambda_{d}(I).
\end{align*}
Note that we can order the collection of balls in a decreasing order. 
\end{remark}

\begin{proof}
Suppose that $I \in \cS_{\ell(n)}$ and let $\cS_{\ell(n+1)}^{*}(I)$ denote the set of cubes $I' \in \cS_{\ell(n+1)}$ contained in the \textbf{interior} of $I$ (remove the edge cubes). Observe that
\begin{equation} \label{eq1.1}
\lambda_{d}\left(\bigcup_{I' \in \cS_{\ell(n+1)}^{*}(I)} I' \right) \geq \frac{1}{2}\lambda_{d}(I)
\end{equation}
 since we are removing at most $2^{d}+1$ hyperplanes worth of cubes which (for sufficiently large $N \in \N$) is small relative to the size of $I$. \par 
 Let
\begin{equation*}
\cG:=\left\{B_{i} : B_{i} \cap \bigcup_{I' \in \cS_{\ell(n+1)}^{*}(I)} I' \neq \emptyset\, , \, \, i \geq G \right\}.
\end{equation*}
We may assume $r(B_{i})$ is monotonically decreasing (since we can order $B_{i}$ however we want) and so for sufficiently large $i$ ($i \geq i_{0}$ such that $r(B_{i_{0}}) < \frac{1}{2}N^{-\ell(n+1)\frac{d+1}{d}}$) any ball $B_{i} \in \cG$ is contained in $I$. By the $5r$-covering lemma (see for example \cite[Theorem 1.2]{Hein01} ) there exists a disjoint subcollection $\cG' \subseteq \cG$ such that
\begin{equation*}
\bigcup_{B_{i} \in\cG} B_{i} \subset \bigcup_{B_{i} \in \cG'}5B_{i}
\end{equation*}
and the balls in $\cG'$ are disjoint. It follows that
\begin{align*}
\lambda_{d}\left( \bigcup_{B_{i} \in \cG'} 5B_{i} \right) & \geq \lambda_{d}\left( \bigcup_{I' \in \cS_{\ell(n+1)}^{*}(I)}I' \cap \limsup_{i \to \infty} B_{i} \right) \\
& =\sum_{I' \in \cS_{\ell(n+1)}^{*}(I)} \lambda_{d}\left( I' \cap \limsup_{i \to \infty} B_{i} \right) \\
& \overset{\eqref{partial_measure}}{\geq} C \sum_{I' \in \cS_{\ell(n+1)}^{*}(I)} \lambda_{d}(I') \\
& \overset{\eqref{eq1.1}}{\geq} \frac{C}{2} \lambda_{d}(I).
\end{align*}
Furthermore, since $\cG'$ is a disjoint collection of balls we have that
\begin{align*}
\lambda_{d}\left( \bigcup_{B_{i} \in \cG'} 5B_{i} \right) & \leq \sum_{B_{i} \in \cG'} 5^{d} \lambda_{d}(B_{i}) \\
& = 5^{d}\lambda_{d}\left( \bigcup_{B_{i} \in \cG'} B_{i} \right).
\end{align*}

Hence
\begin{equation*}
\lambda_{d}\left( \bigcup_{B_{i} \in \cG'} B_{i} \right) \geq \frac{C}{5^{d}2} \lambda_{d}(I).
\end{equation*}
Since the balls of $\cG'$ are disjoint and $r(B_{i}) \to 0$ as $i \to \infty$ we have that
\begin{equation*}
\lambda_{d}\left( \bigcup_{B_{i} \in \cG': i \geq j} B_{i} \right) \to 0 \quad \text{ as } \quad j \to \infty.
\end{equation*}
Hence there exists $j_{0} \geq G$ such that
\begin{equation*}
\lambda_{d}\left( \bigcup_{B_{i} \in \cG': i \leq j_{0}} B_{i} \right) \geq \frac{C}{5^{d}4} \lambda_{d}(I).
\end{equation*}
Letting 
\begin{equation*}
T_{G,I}=\{B_{i} \in \cG' : i \leq j_{0}\}
\end{equation*}
completes the proof.
\end{proof}

\subsection{Constructing \texorpdfstring{$\cD(B)$}{the second Cantor set}}


Armed with Lemma~\ref{TGI} we now proceed with the construction of a Cantor subset $\cD(B)$ of 
\begin{equation*}
\cC^{\tau}(N) \cap W_{d}(Q(N,\tau), \psi_{\tau}) \cap B \, 
\end{equation*}
for any ball $B \subset [0,1]^{d}$. A sketch of the construction is as follows:
\begin{enumerate}[i)]
    \item Firstly given a ball $B$ we choose a suitable $N\in\N$ such that $B$ has non-empty intersection with $\cC^{\tau}(N)$. (We will choose $N$ larger later when necessary.) 
    \item To a surviving cube $I\subset B$ we then apply Lemma~\ref{TGI} to obtain a disjoint collection of balls from  $\left\{B\left(\frac{p}{q}, 2q^{-1-\frac{1}{d}} \right)\right\}_{\frac{p}{q} \in Q(N,\tau)}$ contained in $I$ that cover a positive proportion of $I$.
    \item We then shrink each of these balls i.e.
\begin{equation*}
    B\left(\frac{p}{q}, 2q^{-1-\frac{1}{d}}\right) \mapsto B\left( \frac{p}{q}, q^{-1-\tau} \right)\, .
\end{equation*}
The collection of these shrunken balls completes the first layer.
\item We repeat the same process as above. That is, for each shrunken ball from the first layer we apply Lemma~\ref{I in B(t)} to find a surviving cube $I'$ of $C^{\tau}(N)$. We then choose suitable $G'>0$ and apply Lemma~\ref{TGI} to each surviving cube $I'$, and then shrink each of the balls in the set $T_{G',I'}$. This collection of balls then becomes the second layer. We continue inductively. 
\end{enumerate}

Let us now present the full details of the construction. Fix $B\subset [0,1]^{d}$. We construct $\cD(B)$ as follows:
\begin{enumerate}
    \item Fix $N \in \N$ such that
    \begin{equation} \label{N_choice}
        N \geq r(B)^{\frac{d}{d+1}} \quad \& \quad 3t^{-d}\sum_{n=1}^{\infty} N^{\frac{(d+1)}{d}(n-\ell(n))}=\varepsilon_{N}< \frac{1}{4}\lambda_{d}(B)\, .
    \end{equation}
    This ensures that
    \begin{equation*}
        \lambda_{d}(B\cap \cC^{\tau}(N))\geq \frac{3}{4}\lambda_{d}(B),
    \end{equation*}
    Furthermore, assume $N$ is large enough such that there exists $I_{0}\in \cS_{\ell(1)}$ with $I_{0} \subset B$. Set $E_{0}=I_{0}$.\\
  
    \item Apply Lemma~\ref{TGI} to $I_{0}$ with $G_{0}\geq 1$ chose sufficiently large such that
    \begin{equation} \label{G bound}
     \left. \begin{array}{c}
     r(B_{i})<t^{-1}N^{-\ell(n_{0})\frac{d+1}{d}} \\
     4r(B_{i}(\tau))<r(B_{i}) 
     \end{array} \right\} \quad \forall \, \, i \geq G_{0}.
    \end{equation}
    Let $T_{G_{0},I_{0}}$ denote the set of balls $B_{i}$ from Lemma~\ref{TGI}. \\

    \item \textit{Shrinking:} For each $B_{i} \in T_{G_{0},I_{0}}$ shrink the balls
    \begin{equation*}
        B_{i}=B\left(\frac{p}{q},2q^{-\frac{d+1}{d}}\right) \mapsto B\left(\frac{p}{q},q^{-(1+\tau)}\right)=B_{i}(\tau),
    \end{equation*}
    and let the first layer
    \begin{equation*}
        E_{1}=T_{G_{0},I_{0}}^{\tau}=\left\{B_{i}(\tau):B_{i}\in T_{G_{0},I_{0}} \right\}.
    \end{equation*}
    \\
    \item \textit{Induction:} Suppose $E_{n-1}$ has been constructed. For each $B_{i}(\tau)\in E_{n-1}$ we proceed as follows.
    \begin{enumerate}
        \item[a)] \textit{nesting:} By Lemma~\ref{I in B(t)} there exists some $I\in \bigcup_{n}\cS_{\ell(n)}$, say $I(i)$, with $I(i)\subseteq B_{i}(\tau)$ and 
    \begin{equation*}
        \lambda_{d}(B_{i}(\tau))\leq C(N,d,u,\tau)\lambda_{d}(I(i)).
    \end{equation*}
    For $I(i)$ related to $B_{i}(\tau)$ in this way say $I(i) \sim B_{i}(\tau)$. \\

    \item[b)] \textit{covering:} Choose $G_{n}> G_{n-1}$ such that
    \begin{equation*}
        4r(B_{j})<\min\{r(I(k)): I(k)\sim B_{k}(\tau)\in E_{n-1} \} \quad \forall \, \, j\geq G_{n}.
    \end{equation*}
    (Note that $r(B_j)\to 0$ with $j$.) Again, note this is only a lower bound, $G_{n}$ can be chosen as large as we want, and indeed, we will increase it if necessary in the proof of Lemma \ref{lem:msrballs}. Apply Lemma~\ref{TGI} to $I(i)$ with $G_{n}$ chosen as above. Let $T_{G_{n},I(i)}$ be the set of balls from Lemma~\ref{TGI}. \\

    \item[c)] \textit{shrinking:} For each $B_{i} \in T_{G_{n},I(i)}$ shrink the ball and let
    \begin{equation*}
        T_{G_{n},I(i)}^{\tau}=\left\{ B_{j}(\tau): B_{j} \in T_{G_{n},I(i)} \right\}.
    \end{equation*}
    Define the $n$th layer to be
    \begin{equation*}
        E_{n}=\left\{ B_{j}(\tau) : B_{j}(\tau) \in \bigcup_{I(k)\sim B_{k}(\tau) \in E_{n-1}} T^{\tau}_{G_{n},I(k)} \right\}.
    \end{equation*} 
    \end{enumerate}
\end{enumerate}
Let
\begin{equation*}
    \cD(B)=\bigcap_{n\in \N} \bigcup_{B(\tau)\in E_{n}} B(\tau)
\end{equation*}
This completes the construction of $\cD(B)$. \\
Note that $\cD(B) \subset D_d(\psi_\tau, c_N\psi_\tau)\subset \Bad_d(\psi_\tau)$ since for any $\x \in \cD(B)$ we have an associated sequence
    \begin{equation*}
        I_{0} \supset B_{i_{1}}(\tau) \supset I(i_{1}) \supset B_{i_{2}}(\tau) \supset I(i_{2}) \dots
    \end{equation*}
such that $\x$ is contained in the limit set. So $\x \in \cC^{\tau}(N) \cap W(Q(N,\tau),\psi_{\tau}) \subset D_d(\psi_\tau, c_N\psi_\tau)\subset \Bad_{d}(\psi_{\tau})$.

\begin{remark} \rm The methodology of the proof given here follows closely to the usual proof of the Mass Transference Principle. The key additional step in the construction is the \textit{nesting} step, which we require to ensure $\cD(B)\subset \cC^{\tau}(N)$, and replacing the $K_{G,B}$-lemma \cite[Lemma 5]{BV06} by $T_{G,I}$ Lemma \ref{TGI} above. 
\end{remark}

\subsection{Constructing a measure on \texorpdfstring{$\cD(B)$}{the second Cantor set}}

To obtain a lower bound to the Hausdorff dimension of $\cD(B)$ and hence to $\Bad_d(\psi_\tau)$, we aim to use the mass distribution principle \cite[Proposition 4.2]{F14}. Namely, we need to construct a measure $\mu$ supported on $\cD(B)$ such that measures of balls are in good control. 

To that end, construct a mass distribution $\mu$ on $\cD(B)$ as follows:\\
Set $\mu(I_{0})=1$, then for each $B_{i}(\tau)\in E_{1}$ define
\begin{equation*}
    \mu(B_{i}(\tau))=\frac{\lambda_{d}(B_{i})}{\sum_{B_{j} \in T_{G_{0},I_{0}}} \lambda_{d}(B_{j})}\times \mu(I_{0}).
\end{equation*}
Note
\begin{equation*}
    \sum_{B_{i}(\tau) \in E_{1}}\mu(B_{i}(\tau))=\mu(I_{0}).
\end{equation*}
Now inductively suppose the mass has been distributed over balls in $E_{n-1}$. For each $B_{i}(\tau) \in E_{n}$ there exists, by construction, a unique $B_{k}(\tau) \in E_{n-1}$ such that $B_{i}(\tau)\subset B_{k}(\tau)$. Let $I(k)\sim B_{k}(\tau)$ and define
\begin{equation*}
    \mu(B_{i}(\tau))= \frac{\lambda_{d}(B_{i})}{\sum_{B_{j} \in T_{G_{n},I(k)}} \lambda_{d}(B_{j})}\times \mu(B_{k}(\tau)).
\end{equation*}
 Again the mass is preserved. By Proposition 1.7 of \cite{F14} this extends to a measure $\mu$ with support contained in $\cD(B)$ and for general subset $F\subset [0,1]^{d}$
 \begin{equation*}
     \mu(F)=\inf\left\{ \sum_{i}\mu(B_{i}(\tau)): F\cap \cD(B) \subset \bigcup_{i}B_{i}(\tau) \, \text{ with } \, B_{i}(\tau) \in \bigcup_{n}E_{n} \right\}.
 \end{equation*}
 Recall that $s=(1+d)/(1+\tau)$. 

\begin{lemma}\label{lem:msrballs}
Let $\epsilon>0$. Let $F$ be a ball contained in $B$. Then there is a choice of suitable $(G_n)$ in steps 4 of the construction of $\cD(B)$ such that 
\[
\mu(F)\le r(F)^{s-\epsilon}. 
\]
\end{lemma}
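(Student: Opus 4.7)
The plan is the standard Mass Transference two-step argument: (I) establish $\mu(B(\tau)) \leq r(B(\tau))^{s-\epsilon}$ for every Cantor ball $B(\tau) \in \bigcup_n E_n$ by inductively choosing $G_n$ large enough; (II) extend the bound to an arbitrary ball $F \subset B$ by matching $F$ to the appropriate level of the construction.

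\textbf{Cantor balls.} The recursion defining $\mu$ reads
\[
\mu(B_i(\tau)) = \frac{\lambda_d(B_i)}{\sum_{B_j \in T_{G_n,I(k)}} \lambda_d(B_j)}\, \mu(B_k(\tau)),
\]
where $B_k(\tau) \in E_{n-1}$ is the parent of $B_i(\tau) \in E_n$ and $I(k) \sim B_k(\tau)$. Lemma~\ref{TGI} yields $\sum_j \lambda_d(B_j) \geq \kappa_1 \lambda_d(I(k))$, while Lemma~\ref{I in B(t)} gives $\lambda_d(I(k)) \geq C^{-1} \lambda_d(B_k(\tau))$. Using $\lambda_d(B_i) = 4^d q_i^{-(d+1)}$ (since $r(B_i) = 2q_i^{-1-1/d}$) and $\lambda_d(B_k(\tau)) = 2^d q_k^{-(1+\tau)d}$, iterating from $\mu(I_0) = 1$ gives
\[
\mu(B_{i_n}(\tau)) \leq C_*^n\, q_{i_n}^{-(d+1)} \prod_{j=1}^{n-1} q_{i_j}^{\tau d-1}\, \lambda_d(I_0)^{-1}.
\]
Since $r(B_{i_n}(\tau))^{s-\epsilon} = q_{i_n}^{-(d+1)+(1+\tau)\epsilon}$ and $\tau d > 1$, the desired inequality reduces to
\[
q_{i_n}^{(1+\tau)\epsilon} \gtrsim C_*^n \prod_{j<n} q_{i_j}^{\tau d-1}.
\]
Only finitely many denominators appear at prior levels, so inductively taking $G_n$ large enough (so that every $i \geq G_n$ has $q_i$ above the requisite finite threshold) enforces the bound across all chains.

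\textbf{General balls.} For $F \subset B$ of small radius, I would locate a ``critical level" $N = N(F)$ whose Cantor balls have radii comparable to $r(F)$. The intersecting level-$N$ balls are disjoint, of radii $\asymp r(F)$, and cover $F \cap \cD(B)$; by volume their count is $O(1)$, so
\[
\mu(F) \leq \sum \mu(B_i(\tau)) \leq O(1) \cdot r(F)^{s-\epsilon},
\]
absorbing the constant via a slightly larger $\epsilon$.

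\textbf{Main obstacle.} The principal difficulty is ensuring the ``critical level" exists and behaves well: the level-$n$ Cantor balls can a priori have widely varying radii because the denominators $q_i$ appearing in $T_{G_n, I(k)}$ range over an interval, and a naive dyadic count against $\sum r(B_i(\tau))^d \leq C r(F)^d$ diverges when $s - \epsilon < d$ (which always holds here). I would handle this by imposing additional conditions on $(G_n)$: restrict $T_{G_n, I(k)}$ to denominators in a narrow range $[G_n, C_0 G_n]$ (at the cost of a smaller $\kappa_1$ in Lemma~\ref{TGI}), and impose $r_{\max}^{(n+1)} \geq c\, r_{\min}^{(n)}$ so that every small scale is comparable to some level's radii. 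The main technical check is that these additional constraints remain compatible with the growth requirement from the Cantor-ball bound; this should follow from the density of $Q(N,\tau)$-rationals in each surviving cube provided by Lemmas~\ref{Q_approximations} and~\ref{measure_cC_cubes}, which ensures sufficient rationals of each narrow denominator range to invoke Lemma~\ref{TGI}.
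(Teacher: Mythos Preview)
Your Step~I (Cantor balls) is essentially the paper's argument: both iterate the recursion through Lemmas~\ref{TGI} and~\ref{I in B(t)} and absorb the parent ratio $\mu(B_k(\tau))/\lambda_d(B_k(\tau))$ into $r(B_i(\tau))^{-\epsilon}$ by choosing each $G_n$ large against the finitely many balls already in $E_{n-1}$. The unrolled product you write down and the paper's one-step bound are equivalent.

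In Step~II you have missed the key device and replaced it with an unjustified modification of the construction. The obstacle you flag (wildly varying radii of the $B_i(\tau)$ within a level) is real \emph{only} if one insists on a volume count with the shrunk balls. The paper sidesteps it by using that $\mu(B_i(\tau))$ is, by definition, proportional to $\lambda_d(B_i)$ --- the measure of the \emph{enlarged} ball --- and that the balls in each $T_{G_n,I(k)}$ are pairwise disjoint. Let $n_F$ be the first level at which two balls of $E_{n_F}$ meet $F$; they share a parent $B_k(\tau)\in E_{n_F-1}$. Because the enlarged balls $B_i$ are disjoint and $4r(B_i(\tau))<r(B_i)$, the presence of two shrunk balls meeting $F$ forces $r(B_i)\lesssim r(F)$ for every $B_i(\tau)\in\cF$, and hence $\sum_{\cF}\lambda_d(B_i)\le\lambda_d(5F)$ (this is exactly \cite[Lemma~7]{BV06}). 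Summing the recursion gives
\[
\mu(F)\ \le\ \sum_{\cF}\mu(B_i(\tau))\ \lesssim\ \lambda_d(F)\cdot\frac{\mu(B_k(\tau))}{\lambda_d(B_k(\tau))}\ \le\ r(F)^{d}\cdot r(F)^{-\epsilon}\ \le\ r(F)^{s-\epsilon},
\]
using $d>s$ and $r(F)<r(B_k(\tau))$ together with the Step~I choice of $G_{n_F}$. No ``critical level'' with comparable radii, and no restriction on denominator windows, is needed. By contrast, your proposed fix --- restricting each $T_{G_n,I(k)}$ to denominators in $[G_n,C_0G_n]$ --- is not supported by the cited lemmas: Lemma~\ref{TGI} obtains its positive proportion from the fact that the \emph{entire} tail $\{B_i:i\ge G\}$ covers $\cC^\tau(N)\cap I$ (via Lemma~\ref{Q_approximations}), and nothing in Lemmas~\ref{Q_approximations} or~\ref{measure_cC_cubes} implies that the rationals of $Q(N,\tau)$ in any single narrow window already cover a fixed proportion of every surviving cube. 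That is a genuinely stronger statement, and without it your Step~II does not close.
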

\begin{proof}
{\bf Case 1:  $B_{i}(\tau) \in E_{n}$.}

Assume first that $F=B_{i}(\tau) \in E_{1}$, a ball of centre $\tfrac pq\in \Q^d$ and radius $q^{-(1+\tau)}$. Then 
 \begin{align*}
     \mu(B_{i}(\tau))& =\frac{\lambda_{d}(B_{i})}{\sum_{B_{j} \in T_{G_{0},I_{0}}} \lambda_{d}(B_{j})}\times 1 \\
     & \overset{\text{Lemma~\ref{TGI}}}{\leq} \lambda_{d}(B_{i}) \frac{1}{\kappa_{1}\lambda_d(I_{0})} \\
     & \leq \frac{1}{\kappa_{1}\lambda_d(I_{0})} 2^{2d}q^{-(d+1)} \\
     &= \frac{2^{2d}}{\kappa_{1}\lambda_d(I_{0})} q^{-(1+\tau)\frac{d+1}{1+\tau}} \\
     &= \frac{2^{2d}}{\kappa_{1}\lambda_d(I_{0})} r(B_{i}(\tau))^{s}.
 \end{align*}
 For balls $B_{i}(\tau) \in E_{n}$ centred at some $\tfrac pq\in \Q^d$ with $n>1$ there exists $B_{k}(\tau) \in E_{n-1}$ with $B_{i}(\tau)\subset B_{k}(\tau)$. Let
 \begin{align*}
     \mu(B_{i}(\tau))& =\frac{\lambda_{d}(B_{i})}{\sum_{B_{j} \in T_{G_{0},I(k)}} \lambda_{d}(B_{j})}\times \mu(B_{k}(\tau)) \\
     & \overset{\text{Lemma~\ref{TGI}}}{\leq} \lambda_{d}(B_{i}) \frac{\mu(B_{k}(\tau))}{\kappa_{1}\lambda_d(I(k))} \\
     & \leq \frac{\mu(B_{k}(\tau))}{\kappa_{1}\lambda_d(I(k))} 2^{2d}q^{-(d+1)} \\
     &= \frac{2^{2d}}{\kappa_{1}} r(B_{i}(\tau))^{s} \frac{\mu(B_{k}(\tau))}{\lambda_d(I(k))}\\
     &\overset{\text{Lemma~\ref{I in B(t)}}}{\leq} \frac{2^{2d}}{\kappa_{1}C(N,d,u,\tau)} r(B_{i}(\tau))^{s} \frac{\mu(B_{k}(\tau))}{\lambda_d(B_{k}(\tau))}
 \end{align*}
The value of $G_{n}$ can be chosen as large as we like, and hence for any $\epsilon>0$ we can suppose it was large enough to guarantee
 \begin{equation}\label{assumption1}
 r(B_{i}(\tau))^{-\epsilon}>\frac{\mu(B_{k}(\tau))}{\lambda_d(B_{k}(\tau))}.
 \end{equation}
 Note that this can be done in a uniform manner since for any $B_i(\tau)\in E_n$ we have $i>G_n$ and the set $E_{n-1}$ is finite. Hence, 
 \begin{equation*}
     \mu(B_{i}(\tau))\leq C'(N,d,u,\tau) r(B_{i}(\tau))^{s-\epsilon}.
 \end{equation*}
 Observe the constant is independent of the layer which $B_{i}(\tau)$ is from.

{\bf Case 2:  $F$ general ball.}

Now fix $F$, a ball contained in $B$. We prove the general statement that for every $\epsilon>0$
 \begin{equation}\label{general ball}
     \mu(F)\ll_{N,d,u,\tau} r(F)^{s-\epsilon}.
 \end{equation}
 Firstly note that if $F\cap \cD(B)=\emptyset$ then $\mu(F)=0$ so the statement is trivially satisfied. Hence without loss of generality $F\cap \cD(B)\neq \emptyset$. Since $F$ has non-empty intersection with $\cD(B)$ at least one ball in each layer $E_{n}$ must intersect $F$. If there is exactly one ball $B_{i_{n}}(\tau)$ per layer $E_{n}$ (that intersects $F$) then note that 
 \begin{equation*}
     \mu(B_{i_{n}}(\tau))\to 0 \quad \text{ as } \, n\to \infty
 \end{equation*}
 and so \eqref{general ball} holds, since there exists some $n\in \N$ such that
 \begin{equation*}
    \mu(B_{i_{n}}(\tau))\ll r(F)^{s-\epsilon}.
 \end{equation*}
 Hence assume there exists $n_{F}\in \N$ such that at least two balls, say $B_{i}(\tau),B_{j}(\tau) \in E_{n_{F}}$ have non-empty intersection with $F$. Furthermore $n_{F}$ is the first layer in which this happens.\par 
Note $B_{i}(\tau),B_{j}(\tau)$ belong to the same ball $B_{k}(\tau) \in E_{n_{F}-1}$ (otherwise the $n_{F}-1$ layer has non-empty intersection with at least two balls, contradicting the choice of $n_{F}$). If
 \begin{equation*}
     r(F)\geq r(B_{k}(\tau)),
 \end{equation*}
 then
 \begin{equation*}
     \mu(F) \leq \underset{B_{i}(\tau)\cap F \neq \emptyset}{\sum_{B_{i}(\tau) \in E_{n_{F}-1} :}}\mu(B_{i}(\tau))=\mu(B_{k}(\tau)). 
         \end{equation*}
     We can then use Case 1 of the proof to estimate $\mu(B_k(\tau))$ from above and obtain 
     \begin{equation*}
      \mu(F) \leq C'(N,d,u,\tau) r(B_{k}(\tau))^{s-\epsilon}
     \leq C'(N,d,u,\tau) r(F)^{s-\epsilon}
 \end{equation*}
 as required. \par 
  
 So assume 
 \begin{equation}\label{assumption2}
 r(F)< r(B_{k}(\tau)).    
 \end{equation}
 We have that
 \begin{equation} \label{F count}
    \lambda_{d}(5F)\geq \sum_{B_{i}:B_{i}(\tau)\in \cF} \lambda_{d}(B_{i}),
\end{equation}
where 
\begin{equation*}
    \cF=\left\{B_{i}(\tau) \in T_{G_{n_{F}},I(k)}^{\tau}: B_{i}(\tau)\cap F \neq \emptyset\right\}.
\end{equation*}
See \cite[Lemma 7]{BV06}, setting $A=F$ and $M=B_{i}(\tau)$, $cM=B_{i}$ for justification of the above statement. Hence
 \begin{align*}
     \mu(F)&\leq \sum_{B_{i}(\tau) \in \cF} \mu(B_{i}(\tau)) \\
     & \leq \sum_{B_{i}:B_{i}(\tau) \in \cF} \frac{\lambda_{d}(B_{i})}{\sum_{B_{j}\in T_{G_{n_{F}},I(k)}}\lambda_{d}(B_{j})} \times \mu(B_{k}(\tau)) \\
     & \overset{\text{Lemma~\ref{TGI}}}{\leq} \frac{1}{\kappa_{1}}\sum_{B_{i}:B_{i}(\tau)\in \cF}\lambda_{d}(B_{i}) \times \frac{\mu(B_{i}(\tau))}{\lambda_{d}(I(k))}\\
     &\overset{\text{Lemma~\ref{I in B(t)}}}{\leq} \frac{1}{\kappa_{1}C(N,d,u,\tau)}\sum_{B_{i}:B_{i}(\tau)\in \cF}\lambda_{d}(B_{i}) \times \frac{\mu(B_{i}(\tau))}{\lambda_{d}(B_{k}(\tau))} \\
     & \overset{\eqref{F count}}{\leq} \frac{5^{d}}{\kappa_{1}C(N,d,u,\tau)} \lambda_{d}(F) \frac{\mu(B_{i}(\tau))}{\lambda_{d}(B_{k}(\tau))}. 
     \end{align*}
     Finally, recalling that $d>s$ we continue with 
     \begin{align*}
    \mu(F)&\overset{(d>s)}{\leq} \frac{10^{d}}{\kappa_{1}C(N,d,u,\tau)} r(F)^{s} \frac{\mu(B_{i}(\tau))}{\lambda_{d}(B_{k}(\tau))}\\
     &\leq \frac{10^{d}}{\kappa_{1}C(N,d,u,\tau)} r(F)^{s-\epsilon},
 \end{align*}
so \eqref{general ball} holds, where the last line follows on the observation that
\begin{equation*}
    r(F)^{-\epsilon}\overset{\eqref{assumption2}}{>}r(B_{k}(\tau))^{-\epsilon}\overset{\eqref{assumption1}}{>} \frac{\mu(B_{i}(\tau))}{\lambda_{d}(B_{k}(\tau))}.
\end{equation*}

\end{proof}

We can now finish the proof of Theorem \ref{D-set}. By the mass distribution principle, see for example \cite[Proposition 4.2]{F14}, we have that
\begin{equation*}
    \dimh \cD(B) \geq s-\varepsilon.
\end{equation*}
Since $\varepsilon>0$ is arbitrary the lower bound dimension result of Theorem~\ref{D-set}, and thus Theorem~\ref{main}, follows.
\bibliographystyle{plain}
\bibliography{biblio}

\end{document}